\newtheorem{theorem}{Theorem}[section]
\newtheorem{lemma}[theorem]{Lemma}
\newtheorem{corollary}[theorem]{Corollary}
\newtheorem{question}[theorem]{Question}
\newtheorem{example}[theorem]{Example}
\theoremstyle{definition}
\newtheorem{definition}[theorem]{Definition}
\newtheorem{proposition}[theorem]{Proposition}
\theoremstyle{remark}
\begin{document}

\title[Strongly topological gyrogroups and quotient with respect to $L$-subgyrogroups]
{Strongly topological gyrogroups and quotient with respect to $L$-subgyrogroups}

\author{Meng Bao}
\address{(Meng Bao): College of Mathematics, Sichuan University, Chengdu 610064, P. R. China}
\email{mengbao95213@163.com}

\author{Xuewei Ling}
\address{(Xuewei Ling): Institute of Mathematics, Nanjing Normal University, Nanjing, Jiangsu 210046, P.R. China}
\email{781736783@qq.com}

\author{Xiaoquan Xu*}
\address{(Xiaoquan Xu): School of mathematics and statistics,
Minnan Normal University, Zhangzhou 363000, P. R. China}
\email{xiqxu2002@163.com}

\thanks{The authors are supported by the National Natural Science Foundation of China (Nos. 11661057, 12071199) and the Natural Science Foundation of Jiangxi Province, China (No. 20192ACBL20045)\\
*corresponding author}

\keywords{Strongly topological gyrogroups; Fr\'echet-Urysohn; networks; stratifiable spaces; $k$-semistratifiable spaces.}
\subjclass[2010]{Primary 54A20; secondary 11B05; 26A03; 40A05; 40A30; 40A99.}

\begin{abstract}
In this paper, some generalized metric properties in strongly topological gyrogroups are studied. In particular, it is proved that when $G$ is a strongly topological gyrogroup with a symmetric neighborhood base $\mathscr U$ at $0$ and $H$ is a second-countable admissible subgyrogroup generated from $\mathscr U$, if the quotient space $G/H$ is an $\aleph_{0}$-space (resp., cosmic space), then $G$ is also an $\aleph_{0}$-space (resp., cosmic space); If the quotient space $G/H$ has a star-countable $cs$-network (resp., $wcs^{*}$-network, $k$-network), then $G$ also has a star-countable $cs$-network (resp., $wcs^{*}$-network, $k$-network). Moreover, it is shown that when $G$ is a strongly topological gyrogroup with a symmetric neighborhood base $\mathscr U$ at $0$ and $H$ is a locally compact metrizable admissible subgyrogroup generated from $\mathscr U$, if the quotient space $G/H$ is sequential, then $G$ is also sequential; Furthermore, if the quotient space $G/H$ is strictly (strongly) Fr\'echet-Urysohn, then $G$ is also strictly (strongly) Fr\'echet-Urysohn; Finally, if the quotient space $G/H$ is a stratifiable space (semi-stratifiable space, $\sigma$-space, $k$-semistratifiable space), then $G$ is a local stratifiable space (semi-stratifiable space, $\sigma$-space, $k$-semistratifiable space).
\end{abstract}

\maketitle
\section{Introduction}
The $c$-ball of relativistically admissible velocities with the Einstein velocity addition was researched for many years. The Einstein velocity addition $\oplus _{E}$ is given as follows: $$\mathbf{u}\oplus _{E}\mathbf{v}=\frac{1}{1+\frac{\mathbf{u}\cdot \mathbf{v}}{c^{2}}}(\mathbf{u}+\frac{1}{\gamma _{\mathbf{u}}}\mathbf{v}+\frac{1}{c^{2}}\frac{\gamma _{\mathbf{u}}}{1+\gamma _{\mathbf{u}}}(\mathbf{u}\cdot \mathbf{v})\mathbf{u}),$$ where $\mathbf{u,v}\in \mathbb{R}_{c}^{3}=\{\mathbf{v}\in \mathbb{R}^{3}:||\mathbf{v}||<c\}$ and $\gamma _{\mathbf{u}}$ is given by $$\gamma _{\mathbf{u}}=\frac{1}{\sqrt{1-\frac{\mathbf{u}\cdot \mathbf{u}}{c^{2}}}}.$$ In particular, by the research of this, Ungar in \cite{UA1988,UA} posed the concept of a gyrogroup. It is obvious that a gyrogroup has a weaker algebraic structure than a group. Then, in 2017, a gyrogroup was endowed with a topology by Atiponrat \cite{AW} such that the multiplication is jointly continuous and the inverse is also continuous. At the same time, she claimed that M\"{o}bius gyrogroups, Einstein gyrogroups, and Proper velocity gyrogroups, that were studied in \cite{FM, FM1,FM2,UA}, are all topological gyrogroups. Moreover, Cai, Lin and He in \cite{CZ} proved that every topological gyrogroup is a rectifiable space and deduced that first-countability and metrizability are equivalent in topological gyrogroups. Indeed, this kind of space has been studied for many years, see \cite{AW1,AW2020,LF,LF1,LF2,LF3,SL,ST,ST1,ST2,UA2005,UA2002,WAS2020}. After then, in 2019, Bao and Lin \cite{BL} defined the concept of strongly topological gyrogroups and claimed that M\"{o}bius gyrogroups, Einstein gyrogroups, and Proper velocity gyrogroups endowed with standard topology are all strongly topological gyrogroups but not topological groups. Furthermore, they proved that every strongly topological gyrogroup with a countable pseudocharacter is submetrizable and every locally paracompact strongly topological gyrogroup is paracompact \cite{BL1,BL2}. They also claimed that every feathered strongly topological gyrogroup is paracompact, and hence a $D$-space \cite{BL}. In the same paper, they gave an example to show that there exists a strongly topological gyrogroup which has an infinite $L$-subgyrogroup. Therefore, it is meaningful to research the quotient spaces of a strongly topological gyrogroup with respect to $L$-subgyrogroups as left cosets. In particular, we investigate what properties of topological groups still valid on strongly topological gyrogroups.

In this paper, we mainly study some generalized metric properties in strongly topological gyrogroups. In Section 3, it is proved that when $G$ is a strongly topological gyrogroup with a symmetric neighborhood base $\mathscr U$ at $0$ and $H$ is a second-countable admissible subgyrogroup generated from $\mathscr U$, if the quotient space $G/H$ is an $\aleph_{0}$-space (resp., cosmic space), then $G$ is also an $\aleph_{0}$-space (resp., cosmic space); If the quotient space $G/H$ has a star-countable $cs$-network (resp., $wcs^{*}$-network, $k$-network), then $G$ also has a star-countable $cs$-network (resp., $wcs^{*}$-network, $k$-network). In Section 4, we investigate the quotient space $G/H$ with some generalized metric properties when $G$ is a strongly topological gyrogroup with a symmetric neighborhood base $\mathscr U$ at $0$ and $H$ is a locally compact metrizable admissible subgyrogroup generated from $\mathscr U$. We show that when $G$ is a strongly topological gyrogroup with a symmetric neighborhood base $\mathscr U$ at $0$ and $H$ is a locally compact metrizable admissible subgyrogroup generated from $\mathscr U$, if the quotient space $G/H$ is sequential, then $G$ is also sequential; Furthermore, if the quotient space $G/H$ is strictly (strongly) Fr\'echet-Urysohn, then $G$ is also strictly (strongly) Fr\'echet-Urysohn; Finally, if the quotient space $G/H$ is a stratifiable space (semi-stratifiable space, $\sigma$-space, $k$-semistratifiable space), then $G$ is a local stratifiable space (semi-stratifiable space, $\sigma$-space, $k$-semistratifiable space).

\section{Preliminaries}

Throughout this paper, all topological spaces are assumed to be Hausdorff, unless otherwise is explicitly stated. Let $\mathbb{N}$ be the set of all positive integers and $\omega$ the first infinite ordinal. The readers may consult \cite{AA, E, linbook, UA} for notation and terminology not explicitly given here. Next we recall some definitions and facts.

\begin{definition}\cite{UA}
Let $(G, \oplus)$ be a groupoid. The system $(G,\oplus)$ is called a {\it gyrogroup}, if its binary operation satisfies the following conditions:

\smallskip
(G1) There exists a unique identity element $0\in G$ such that $0\oplus a=a=a\oplus0$ for all $a\in G$.

\smallskip
(G2) For each $x\in G$, there exists a unique inverse element $\ominus x\in G$ such that $\ominus x \oplus x=0=x\oplus (\ominus x)$.

\smallskip
(G3) For all $x, y\in G$, there exists $\mbox{gyr}[x, y]\in \mbox{Aut}(G, \oplus)$ with the property that $x\oplus (y\oplus z)=(x\oplus y)\oplus \mbox{gyr}[x, y](z)$ for all $z\in G$.

\smallskip
(G4) For any $x, y\in G$, $\mbox{gyr}[x\oplus y, y]=\mbox{gyr}[x, y]$.
\end{definition}

Notice that a group is a gyrogroup $(G,\oplus)$ such that $\mbox{gyr}[x,y]$ is the identity function for all $x, y\in G$.

\begin{lemma}\cite{UA}\label{a}
Let $(G, \oplus)$ be a gyrogroup. Then for any $x, y, z\in G$, we obtain the following:

\begin{enumerate}
\smallskip
\item $(\ominus x)\oplus (x\oplus y)=y$. \ \ \ (left cancellation law)

\smallskip
\item $(x\oplus (\ominus y))\oplus gyr[x, \ominus y](y)=x$. \ \ \ (right cancellation law)

\smallskip
\item $(x\oplus gyr[x, y](\ominus y))\oplus y=x$.

\smallskip
\item $gyr[x, y](z)=\ominus (x\oplus y)\oplus (x\oplus (y\oplus z))$.

\smallskip
\item $(x\oplus y)\oplus z=x\oplus (y\oplus gyr[y,x](z))$.
\end{enumerate}
\end{lemma}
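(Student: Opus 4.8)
The plan is to isolate part~(1), the left cancellation law, as the one genuinely substantive identity, and to deduce (2)--(5) from it together with the axioms (G1)--(G4). For (1) the first step is to show that each left translation $L_a\colon x\mapsto a\oplus x$ is a bijection of $G$. Applying (G3) to the triple $(a,\ominus a,z)$ and using (G1),(G2) gives $a\oplus(\ominus a\oplus z)=(a\oplus\ominus a)\oplus\mathrm{gyr}[a,\ominus a](z)=\mathrm{gyr}[a,\ominus a](z)$, i.e. $L_a\circ L_{\ominus a}=\mathrm{gyr}[a,\ominus a]$; replacing $a$ by $\ominus a$ gives $L_{\ominus a}\circ L_a=\mathrm{gyr}[\ominus a,a]$. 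Since gyrations lie in $\mathrm{Aut}(G,\oplus)$, both composites are bijections, so $L_a$ is injective (from the second) and surjective (from the first). Now apply (G3) to the triple $(0,a,z)$: since $0$ is a two-sided identity this reads $a\oplus z=(0\oplus a)\oplus\mathrm{gyr}[0,a](z)=a\oplus\mathrm{gyr}[0,a](z)$, that is, $L_a(z)=L_a(\mathrm{gyr}[0,a](z))$; injectivity of $L_a$ now forces $\mathrm{gyr}[0,a]=\mathrm{id}_G$. Finally the loop property (G4), taken with $\ominus a$ in place of $x$ and $a$ in place of $y$, gives $\mathrm{gyr}[\ominus a,a]=\mathrm{gyr}[\ominus a\oplus a,a]=\mathrm{gyr}[0,a]=\mathrm{id}_G$, whence $\ominus a\oplus(a\oplus b)=(L_{\ominus a}\circ L_a)(b)=\mathrm{gyr}[\ominus a,a](b)=b$.

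With (1) in hand, parts (2) and (4) are immediate. For (4), rewrite (G3) as $\mathrm{gyr}[x,y](z)=\ominus(x\oplus y)\oplus\big((x\oplus y)\oplus\mathrm{gyr}[x,y](z)\big)=\ominus(x\oplus y)\oplus\big(x\oplus(y\oplus z)\big)$, where the first equality is (1). For (2): $x=x\oplus 0=x\oplus(\ominus y\oplus y)=(x\oplus\ominus y)\oplus\mathrm{gyr}[x,\ominus y](y)$ by (G1),(G2),(G3). For (5), I would first establish the gyration inversion law $\mathrm{gyr}[y,x]=\mathrm{gyr}[x,y]^{-1}$ (a short argument from (1), (4) and the loop property); then, writing a given $z$ as $z=\mathrm{gyr}[x,y](w)$ with $w=\mathrm{gyr}[y,x](z)$, (G3) gives $(x\oplus y)\oplus z=(x\oplus y)\oplus\mathrm{gyr}[x,y](w)=x\oplus(y\oplus w)=x\oplus(y\oplus\mathrm{gyr}[y,x](z))$.

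Part (3) is the one I expect to fight hardest. Because gyrations are automorphisms, $\mathrm{gyr}[x,y](\ominus y)\oplus\mathrm{gyr}[x,y](y)=\mathrm{gyr}[x,y](\ominus y\oplus y)=0$, so $x=x\oplus 0=x\oplus\big(\mathrm{gyr}[x,y](\ominus y)\oplus\mathrm{gyr}[x,y](y)\big)$, and (G3) turns this into $x=\big(x\oplus\mathrm{gyr}[x,y](\ominus y)\big)\oplus\mathrm{gyr}\big[x,\mathrm{gyr}[x,y](\ominus y)\big]\big(\mathrm{gyr}[x,y](y)\big)$. Comparing with the target, it then remains to show that the composite gyration $\mathrm{gyr}\big[x,\mathrm{gyr}[x,y](\ominus y)\big]\circ\mathrm{gyr}[x,y]$ fixes $y$; I would do this via the automorphic-invariance identity $\mathrm{gyr}[\alpha a,\alpha b]=\alpha\circ\mathrm{gyr}[a,b]\circ\alpha^{-1}$ for $\alpha\in\mathrm{Aut}(G,\oplus)$ (an easy consequence of (4)), together with the inversion law and a further use of the loop property (G4). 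An alternative route to (3) runs through (2): one checks $(a\oplus b)\boxminus b=a$, where $a\boxminus b:=a\oplus\mathrm{gyr}[a,b](\ominus b)$, using (G4), and then argues that right translations are bijections so that this one-sided cancellation becomes two-sided. The main obstacle throughout is the bookkeeping with composed and nested gyrations demanded by the inversion law and by (3); the conceptual heart, by contrast, is the observation in the proof of (1) that gyrations being automorphisms forces left translations to be bijective---which is precisely what licenses the step $L_a(z)=L_a(\mathrm{gyr}[0,a](z))\Rightarrow z=\mathrm{gyr}[0,a](z)$ that would otherwise be a circular appeal to the cancellation law being proved.
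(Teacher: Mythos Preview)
The paper does not prove this lemma at all: it is stated with the citation \cite{UA} and used as a black box thereafter. So there is nothing in the paper to compare your argument against; you have supplied a proof where the authors simply quoted Ungar.

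Your strategy---isolate (1) as the substantive step by showing left translations are bijections (via $L_a\circ L_{\ominus a}=\mathrm{gyr}[a,\ominus a]\in\mathrm{Aut}(G)$), use this to force $\mathrm{gyr}[0,a]=\mathrm{id}$ and then $\mathrm{gyr}[\ominus a,a]=\mathrm{id}$ via (G4), and derive (2) and (4) as immediate corollaries---is exactly the standard development in Ungar's book, and your write-up of those parts is clean. One caveat: the gyration inversion law $\mathrm{gyr}[y,x]=\mathrm{gyr}[x,y]^{-1}$ that you invoke for (5) is not quite as short as you suggest. A direct attack from (1), (4), (G3), (G4) leads first to $\mathrm{gyr}[b,a]^{-1}=\mathrm{gyr}[a,\,b\oplus a]$, and closing the gap to $\mathrm{gyr}[a,b]$ is the \emph{right} loop property, which is not one of the axioms and itself needs a small argument (in Ungar it is usually packaged together with the inversion law via a nested gyroassociativity computation). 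This is not a gap in principle---the result is true and provable from (G1)--(G4)---but your phrase ``a short argument'' undersells it. Similarly for (3): your first route reduces to showing that $\mathrm{gyr}\!\big[x,\mathrm{gyr}[x,y](\ominus y)\big]\circ\mathrm{gyr}[x,y]$ fixes $y$, and the automorphic-invariance identity you plan to use is the right tool, but the verification again passes through the inversion/right-loop circle. Once (5) is in hand, the cleanest route to (3) is simply to apply (5) to the left side and unwind. None of this is a real obstruction; it is just more bookkeeping than ``short'' implies.
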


The definition of a subgyrogroup is given as follows.

\begin{definition}\cite{ST}
Let $(G,\oplus)$ be a gyrogroup. A nonempty subset $H$ of $G$ is called a {\it subgyrogroup}, denoted
by $H\leq G$, if $H$ forms a gyrogroup under the operation inherited from $G$ and the restriction of $gyr[a,b]$ to $H$ is an automorphism of $H$ for all $a,b\in H$.

\smallskip
Furthermore, a subgyrogroup $H$ of $G$ is said to be an {\it $L$-subgyrogroup}, denoted
by $H\leq_{L} G$, if $gyr[a, h](H)=H$ for all $a\in G$ and $h\in H$.
\end{definition}

The subgyrogroup criterion is given in \cite{ST}, that is, a nonempty subset $H$ of a gyrogroup $G$ is a subgyrogroup if and only if $\ominus a\in H$ and $a\oplus b\in H$ for all $a,b\in H$, which explains that by the item (4) in Lemma \ref{a} it follows the subgyrogroup criterion.

\begin{definition}\cite{AW}
A triple $(G, \tau, \oplus)$ is called a {\it topological gyrogroup} if the following statements hold:

\smallskip
(1) $(G, \tau)$ is a topological space.

\smallskip
(2) $(G, \oplus)$ is a gyrogroup.

\smallskip
(3) The binary operation $\oplus: G\times G\rightarrow G$ is jointly continuous while $G\times G$ is endowed with the product topology, and the operation of taking the inverse $\ominus (\cdot): G\rightarrow G$, i.e. $x\rightarrow \ominus x$, is also continuous.
\end{definition}

Obviously, every topological group is a topological gyrogroup. However, any topological gyrogroup whose gyrations are not identically equal to the identity is not a topological group.

\begin{definition}\cite{BL}\label{d11}
Let $G$ be a topological gyrogroup. We say that $G$ is a {\it strongly topological gyrogroup} if there exists a neighborhood base $\mathscr U$ of $0$ such that, for every $U\in \mathscr U$, $\mbox{gyr}[x, y](U)=U$ for any $x, y\in G$. For convenience, we say that $G$ is a strongly topological gyrogroup with neighborhood base $\mathscr U$ at $0$.
\end{definition}

Clearly, we may assume that $U$ is symmetric for each $U\in\mathscr U$ in Definition~\ref{d11}. Moreover, in the
classical M\"{o}bius, Einstein, or Proper Velocity gyrogroups we know that gyrations are indeed special rotations, however for an arbitrary gyrogroup, gyrations belong to the automorphism group of $G$ and need not be necessarily rotations.

In \cite{BL}, the authors proved that there is a strongly topological gyrogroup which is not a topological group, see Example \ref{lz1}.

\begin{example}\cite{BL}\label{lz1}
Let $\mathbb{D}$ be the complex open unit disk $\{z\in \mathbb{C}:|z|<1\}$. We consider $\mathbb{D}$ with the standard topology. In \cite[Example 2]{AW}, define a M\"{o}bius addition $\oplus _{M}: \mathbb{D}\times \mathbb{D}\rightarrow \mathbb{D}$ to be a function such that $$a\oplus _{M}b=\frac{a+b}{1+\bar{a}b}\ \mbox{for all}\ a, b\in \mathbb{D}.$$ Then $(\mathbb{D}, \oplus _{M})$ is a gyrogroup, and it follows from \cite[Example 2]{AW} that $$gyr[a, b](c)=\frac{1+a\bar{b}}{1+\bar{a}b}c\ \mbox{for any}\ a, b, c\in \mathbb{D}.$$ For any $n\in \mathbb{N}$, let $U_{n}=\{x\in \mathbb{D}: |x|\leq \frac{1}{n}\}$. Then, $\mathscr U=\{U_{n}: n\in \mathbb{N}\}$ is a neighborhood base of $0$. Moreover, we observe that $|\frac{1+a\bar{b}}{1+\bar{a}b}|=1$. Therefore, we obtain that $gyr[x, y](U)\subset U$, for any $x, y\in \mathbb{D}$ and each $U\in \mathscr U$, then it follows that $gyr[x, y](U)=U$ by \cite[Proposition 2.6]{ST}. Hence, $(\mathbb{D}, \oplus _{M})$ is a strongly topological gyrogroup. However, $(\mathbb{D}, \oplus _{M})$ is not a group \cite[Example 2]{AW}.
\end{example}

\bigskip
{\bf Remark 1.} Even though M\"{o}bius gyrogroups, Einstein gyrogroups, and Proper velocity gyrogroups are all strongly topological gyrogroups, all of them do not possess any non-trivial $L$-subgyrogroups. However, there is a class of strongly topological gyrogroups which has a non-trivial $L$-subgyrogroup, see the following example.

\begin{example}\label{e1}\cite{BL}
There exists a strongly topological gyrogroup which has an infinite $L$-subgyrogroup.
\end{example}

\smallskip
Indeed, let $X$ be an arbitrary feathered non-metrizable topological group, and let $Y$ be an any strongly topological gyrogroup with a non-trivial $L$-subgyrogroup (such as the gyrogroup $K_{16}$ \cite[p. 41]{UA2002}). Put $G=X\times Y$ with the product topology and the operation with coordinate. Then $G$ is an infinite strongly topological gyrogroup since $X$ is infinite. Let $H$ be a non-trivial $L$-subgyrogroup of $Y$, and take an arbitrary infinite subgroup $N$ of $X$. Then $N\times H$ is an infinite $L$-subgyrogroup of $G$.

\begin{definition}\cite{E, GMT, GJA, LS1}
Let $\mathcal{P}$ be a family of subsets of a topological space $X$.

1. $\mathcal{P}$ is called a {\it network} for $X$ if whenever $x\in U$ with $U$ open in $X$, then there exists $P\in \mathcal{P}$ such that $x\in P\subset U$.

2. $\mathcal{P}$ is called a {\it k-network} for $X$ if whenever $K\subset U$ with $K$ compact and $U$ open in $X$, there exists a finite family $\mathcal{P}^{'}\subset \mathcal{P}$ such that $K\subset \bigcup \mathcal{P}^{'}\subset U$.

3. $\mathcal{P}$ is called a {\it cs-network} for $X$ if, given a sequence $\{x_{n}\}_{n}$ converging to a point $x$ in $X$ and a neighborhood $U$ of $x$ in $X$, then $\{x\}\cup \{x_{n}:n\geq n_{0}\}\subset P\subset U$ for some $n_{0}\in \mathbb{N}$ and some $P\in \mathcal{P}$.

4. $\mathcal{P}$ is called a {\it $wcs^{*}$-network} for $X$ if, given a sequence $\{x_{n}\}_{n}$ converging to a point $x$ in $X$ and a neighborhood $U$ of $x$ in $X$, there exists a subsequence $\{x_{n_{i}}\}_{i}$ of the sequence $\{x_{n}\}_{n}$ such that $\{x_{n_{i}}:i\in \mathbb{N}\}\subset P\subset U$ for some $P\in \mathcal{P}$.

\end{definition}

It is claimed in \cite{linbook} that every base is a $k$-network and a $cs$-network for a topological space, and every $k$-network or every $cs$-network is a $wcs^{*}$-network for a topological space, but the converse does not hold. Moreover, a space $X$ has a countable $cs$-network if and only if $X$ has a countable $k$-network if and only if $X$ has a countable $wcs^{*}$-network, see \cite{LS}.

\begin{definition}\cite{ME1} Let $X$ be a topological space.

1. $X$ is called {\it cosmic} if $X$ is a regular space with a countable network.

2. $X$ is called an $\aleph_{0}$-space if it is a regular space with a countable $k$-network.
\end{definition}

{\bf Remark 2.} It was shown in \cite{GG} that every separable metric space is an $\aleph_{0}$-space. Moreover, every $\aleph_{0}$-space is a cosmic space and every cosmic space is a paracompact, separable space. Then, in \cite{GJA}, it was proved that a topological space is an $\aleph_{0}$-space if and only if it is a regular space with a countable $cs$-network.

\bigskip
Now we recall the following concept of the coset space of a topological gyrogroup.

Let $(G, \tau, \oplus)$ be a topological gyrogroup and $H$ an $L$-subgyrogroup of $G$. It follows from \cite[Theorem 20]{ST} that $G/H=\{a\oplus H:a\in G\}$ is a partition of $G$. We denote by $\pi$ the mapping $a\mapsto a\oplus H$ from $G$ onto $G/H$. Clearly, for each $a\in G$, we have $\pi^{-1}\{\pi(a)\}=a\oplus H$.
Denote by $\tau (G)$ the topology of $G$. In the set $G/H$, we define a family $\tau (G/H)$ of subsets as follows: $$\tau (G/H)=\{O\subset G/H: \pi^{-1}(O)\in \tau (G)\}.$$

The following concept of an admissible subgyrogroup of a strongly topological gyrogroup was first introduced in \cite{BL1}, which plays an important role in this paper.

A subgyrogroup $H$ of a topological gyrogroup $G$ is called {\it admissible} if there exists a sequence $\{U_{n}:n\in \omega\}$ of open symmetric neighborhoods of the identity $0$ in $G$ such that $U_{n+1}\oplus (U_{n+1}\oplus U_{n+1})\subset U_{n}$ for each $n\in \omega$ and $H=\bigcap _{n\in \omega}U_{n}$. If $G$ is a strongly topological gyrogroup with a symmetric neighborhood base $\mathscr U$ at $0$ and each $U_{n}\in \mathscr U$, we say that the admissible topological subgyrogroup is generated from $\mathscr U$ \cite{BL2}.

It was shown in \cite{BL2} that if $G$ is a strongly topological gyrogroup with a symmetric neighborhood base $\mathscr U$ at $0$, then each admissible topological subgyrogroup $H$ generated from $\mathscr U$ is a closed $L$-subgyrogroup of $G$.

\section{Quotient with respect to second-countable admissible subgyrogroups}

In this section, we study the quotient space $G/H$ with some generalized metric properties when $G$ is a strongly topological gyrogroup with a symmetric neighborhood base $\mathscr U$ at $0$ and $H$ is a second-countable admissible subgyrogroup of $G$ generated from $\mathscr U$. Suppose that $G$ is a strongly topological gyrogroup with a symmetric neighborhood base $\mathscr U$ at $0$ and $H$ is a second-countable admissible subgyrogroup generated from $\mathscr U$. We prove that if the quotient space $G/H$ is an $\aleph_{0}$-space (resp., cosmic space), then $G$ is also an $\aleph_{0}$-space (resp., cosmic space). Moreover, we show that if the quotient space $G/H$ has a star-countable $cs$-network (resp., $wcs^{*}$-network, $k$-network), then $G$ also has a star-countable $cs$-network (resp., $wcs^{*}$-network, $k$-network).

\begin{lemma}\label{t00000}\cite{BL}
Let $(G, \tau, \oplus)$ be a topological gyrogroup and $H$ an $L$-subgyrogroup of $G$. Then the natural homomorphism $\pi$ from a topological gyrogroup $G$ to its quotient topology on $G/H$ is an open and continuous mapping.
\end{lemma}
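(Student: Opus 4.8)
The plan is to treat the two assertions separately. Continuity of $\pi$ is immediate from the definition of the quotient topology: since $\tau(G/H)=\{O\subseteq G/H:\pi^{-1}(O)\in\tau(G)\}$, every $O\in\tau(G/H)$ has $\pi^{-1}(O)\in\tau(G)$ by construction, which is exactly continuity. So the real content of the lemma lies in the openness of $\pi$.

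To prove that $\pi$ is open, I would fix an arbitrary $U\in\tau(G)$; because a subset of $G/H$ is open precisely when its $\pi$-preimage is open in $G$, it is enough to check that the saturated set $\pi^{-1}(\pi(U))$ is open. The crucial step is the identity
$$\pi^{-1}(\pi(U))=\bigcup_{h\in H}\{\,a\in G:a\oplus h\in U\,\}.$$
For the inclusion ``$\subseteq$'': if $a\in\pi^{-1}(\pi(U))$, then $a\oplus H=u\oplus H$ for some $u\in U$, and since $u\in u\oplus H=a\oplus H$ we can write $u=a\oplus h$ with $h\in H$, so $a$ belongs to the $h$-th set on the right. For the inclusion ``$\supseteq$'': if $a\oplus h\in U$ for some $h\in H$, set $v:=a\oplus h$; then $v\in(a\oplus H)\cap(v\oplus H)$, and because $\{a'\oplus H:a'\in G\}$ is a partition of $G$---this is exactly where the hypothesis that $H$ is an $L$-subgyrogroup enters, via \cite[Theorem~20]{ST}---the two members $a\oplus H$ and $v\oplus H$ coincide, whence $\pi(a)=\pi(v)\in\pi(U)$.

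Finally, for each fixed $h\in H$ the right translation $x\mapsto x\oplus h$ is continuous, being the restriction of the jointly continuous operation $\oplus$ to $G\times\{h\}$, so $\{a\in G:a\oplus h\in U\}$ is open in $G$; therefore $\pi^{-1}(\pi(U))$ is a union of open sets, hence open, and $\pi(U)\in\tau(G/H)$. I expect the one genuinely delicate point to be the inclusion ``$\supseteq$'': it rests on the disjointness of distinct left cosets of an $L$-subgyrogroup, and without that property the displayed identity (and the lemma itself) can fail. Everything else is formal; in particular one only needs \emph{continuity} of the right translations, not that they are homeomorphisms, since the saturation has been written as a union of their preimages.
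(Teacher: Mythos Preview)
The paper does not supply its own proof of this lemma; it is quoted from \cite{BL}. Your argument is correct and self-contained. The standard route (and presumably the one in \cite{BL}, following the pattern for topological groups) is to write the saturation as $\pi^{-1}(\pi(U))=U\oplus H=\bigcup_{h\in H}(U\oplus h)$ and then invoke the fact that each right gyrotranslation $R_h:x\mapsto x\oplus h$ is a homeomorphism of $G$ (this is known for topological gyrogroups, see \cite{AW}), so each $U\oplus h$ is open. You instead express the saturation as $\pi^{-1}(\pi(U))=\bigcup_{h\in H}R_h^{-1}(U)$, which needs only \emph{continuity} of each $R_h$---an immediate consequence of the joint continuity of $\oplus$---and thereby sidesteps the question of whether right translations are open maps. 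Both versions rest on the same essential ingredient, the partition property of left cosets of an $L$-subgyrogroup from \cite[Theorem~20]{ST}; your variant is marginally more economical in what it assumes.
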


\begin{lemma}\cite{AA}\label{t00001}
Suppose that $f:X\rightarrow Y$ is an open continuous mapping of a space $X$ onto a space $Y$, $x\in X$, $B\subset Y$, and $f(x)\in \overline{B}$. Then $x\in \overline{f^{-1}(B)}$. In particular, $\overline{f^{-1}(B)}=f^{-1}(\overline{B})$.
\end{lemma}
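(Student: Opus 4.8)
The plan is to read off the first assertion directly from the neighbourhood description of closure, using openness of $f$ in one crucial place, and then to obtain the displayed equality $\overline{f^{-1}(B)}=f^{-1}(\overline{B})$ as a pair of inclusions, one of which is immediate from continuity and the other of which is precisely the first assertion applied pointwise.

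For the first assertion, I would fix an arbitrary open neighbourhood $U$ of $x$ in $X$ and show that $U\cap f^{-1}(B)\neq\emptyset$. Since $f$ is open, $f(U)$ is an open subset of $Y$ containing $f(x)$; since $f(x)\in\overline{B}$, this open set must meet $B$, so pick $y\in f(U)\cap B$. As $y\in f(U)$, there is $z\in U$ with $f(z)=y\in B$, i.e. $z\in U\cap f^{-1}(B)$. Because $U$ was an arbitrary open neighbourhood of $x$, this gives $x\in\overline{f^{-1}(B)}$.

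For the equality, the inclusion $\overline{f^{-1}(B)}\subset f^{-1}(\overline{B})$ follows since $f$ is continuous: $f^{-1}(\overline{B})$ is then a closed set containing $f^{-1}(B)$, hence it contains $\overline{f^{-1}(B)}$. Conversely, if $x\in f^{-1}(\overline{B})$ then $f(x)\in\overline{B}$, and the first assertion yields $x\in\overline{f^{-1}(B)}$; thus $f^{-1}(\overline{B})\subset\overline{f^{-1}(B)}$, and the two sets coincide.

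I do not expect a real obstacle here: the argument is short and purely point-set. The only points worth flagging are that openness of $f$ is used exactly once, at the step producing the open set $f(U)$ — and cannot be dispensed with — while surjectivity of $f$ plays no role in this particular statement (it is needed elsewhere in the applications, e.g. for $\pi$ of Lemma \ref{t00000}). I would simply make sure the neighbourhood-based characterisation of closure is invoked cleanly so that the "open, then continuous" roles are transparent.
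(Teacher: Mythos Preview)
Your proof is correct and is the standard argument. Note that the paper does not actually prove this lemma: it is quoted verbatim from \cite{AA} and stated without proof, so there is nothing to compare against beyond observing that your argument is exactly the classical one. Your side remarks are also accurate --- openness is used precisely once to make $f(U)$ a neighbourhood of $f(x)$, and surjectivity is irrelevant to the statement as written.
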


\begin{proposition}\label{t00002}
Suppose that $G$ is a topological gyrogroup and $H$ is a closed and separable $L$-subgyrogroup of $G$. If $Y$ is a separable subset of $G/H$, $\pi ^{-1}(Y)$ is also separable in $G$.
\end{proposition}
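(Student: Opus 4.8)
The plan is to build an explicit countable dense subset of $\pi^{-1}(Y)$ from countable dense subsets of $Y$ and of $H$, imitating the classical argument for coset spaces of topological groups. Since $Y$ is separable, fix a countable set $D=\{y_{n}:n\in\omega\}$ dense in $Y$, and for each $n$ choose $x_{n}\in G$ with $\pi(x_{n})=y_{n}$. Since $H$ is separable, fix a countable set $C=\{h_{k}:k\in\omega\}$ dense in $H$, and put $E=\{x_{n}\oplus h_{k}:n,k\in\omega\}$, a countable subset of $G$. Because $h_{k}\in H$ we have $x_{n}\oplus h_{k}\in x_{n}\oplus H=\pi^{-1}(\pi(x_{n}))=\pi^{-1}(y_{n})$, so $E\subseteq\pi^{-1}(D)\subseteq\pi^{-1}(Y)$; hence it suffices to prove $\pi^{-1}(Y)\subseteq\overline{E}$.

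I would establish this in two steps. First, fix $n$: the map $c\mapsto x_{n}\oplus c$ is a continuous surjection of $H$ onto $x_{n}\oplus H$ (continuity of $\oplus$ is part of the definition of a topological gyrogroup), so it sends the dense set $C$ onto a subset dense in $x_{n}\oplus H$; therefore $x_{n}\oplus H\subseteq\overline{\{x_{n}\oplus h_{k}:k\in\omega\}}\subseteq\overline{E}$, where the first inclusion uses that if $A$ is dense in a subspace $Z\subseteq G$ then $Z=\overline{A}^{Z}=\overline{A}^{G}\cap Z\subseteq\overline{A}^{G}$. Taking the union over $n$ yields $\pi^{-1}(D)=\bigcup_{n\in\omega}(x_{n}\oplus H)\subseteq\overline{E}$, and hence $\overline{\pi^{-1}(D)}\subseteq\overline{E}$. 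Second, by Lemma~\ref{t00000} the map $\pi$ is open and continuous, so Lemma~\ref{t00001} gives $\overline{\pi^{-1}(D)}=\pi^{-1}(\overline{D})$, where the closure of $D$ is taken in $G/H$. Since $D$ is dense in $Y$ we have $Y\subseteq\overline{D}$, whence $\pi^{-1}(Y)\subseteq\pi^{-1}(\overline{D})=\overline{\pi^{-1}(D)}\subseteq\overline{E}$, as required.

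Combining the two steps, $E$ is a countable dense subset of $\pi^{-1}(Y)$, so $\pi^{-1}(Y)$ is separable. The argument is short, and the only points needing care are the relative-versus-ambient closure bookkeeping in the first step and the identity $\pi^{-1}(\pi(x_{n}))=x_{n}\oplus H$, which is built into the definition of the coset space; note in particular that neither the closedness of $H$ nor any use of gyrations beyond continuity of $\oplus$ is actually needed. I do not anticipate any genuine obstacle.
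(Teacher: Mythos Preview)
Your argument is correct and follows essentially the same route as the paper: pick a countable set dense in $Y$, lift each point to a representative in $G$, use separability of the fibers $x_n\oplus H$ (you via the continuous surjection $c\mapsto x_n\oplus c$, the paper via the homeomorphism of $x\oplus H$ with $H$), and then invoke Lemmas~\ref{t00000} and~\ref{t00001} to pass from $\pi^{-1}(D)\subset\overline{E}$ to $\pi^{-1}(Y)\subset\overline{E}$. Your observation that the closedness of $H$ is not used is accurate; the paper's proof does not use it either.
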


\begin{proof}
Let $\pi$ be the natural homomorphism from $G$ onto the quotient space $G/H$. Since $Y$ is a separable subset of $G/H$, there is a countable subset $B$ of $G/H$ such that $Y\subset \overline{B}$. For each $y\in B$, we can find $x\in G$ such that $\pi (x)=y$. Since $H$ is separable and $\pi^{-1}(\pi (x))=x\oplus H$ is homeomorphic to $H$, there is a countable subset $M_{y}$ of $\pi^{-1}(\pi (x))$ such that $\overline{M_{y}}=x\oplus H$. Put $M=\bigcup \{M_{y}:y\in B\}$. It is clear that $M$ is countable and $\overline{M}=\pi ^{-1}(B)$. It follows from Lemma \ref{t00000} that $\pi$ is an open and continuous mapping. Then, $\pi^{-1}(Y)\subset \pi^{-1}(\overline{B})=\overline{\pi ^{-1}(B)}=\overline{\overline{M}}=\overline{M}$ by Lemma \ref{t00001}. Therefore, $\pi ^{-1}(Y)$ is separable in $G$.
\end{proof}

\begin{lemma}\cite{BL2}\label{t00003}
Every locally paracompact strongly topological gyrogroup is paracompact.
\end{lemma}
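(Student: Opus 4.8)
The plan is to follow the classical strategy for locally paracompact topological groups: cut $G$ into clopen translates of a suitably chosen open subgyrogroup $H$, reduce the problem to the paracompactness of $H$, and then control $H$ through the neighbourhood base $\mathscr U$.

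First I would fix the right member of $\mathscr U$. Local paracompactness provides a paracompact neighbourhood $P$ of $0$; by regularity pick an open $V$ with $\overline{V}\subset\mathrm{int}\,P$, and then a symmetric $U_{0}\in\mathscr U$ with $U_{0}\subset V$, so that $\overline{U_{0}}$, being closed in the paracompact space $P$, is paracompact. Let $H=\bigcup_{n\in\omega}S_{n}$ be the subgyrogroup generated by $U_{0}$, where $S_{0}=U_{0}$ and $S_{n+1}=S_{n}\cup(\ominus S_{n})\cup(S_{n}\oplus S_{n})$. Since left translations and $\ominus(\cdot)$ are homeomorphisms of $G$ and $S_{n}\oplus S_{n}=\bigcup_{a\in S_{n}}(a\oplus S_{n})$, an easy induction shows that each $S_{n}$, and hence $H$, is open. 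Because $U_{0}\in\mathscr U$ we have $\mbox{gyr}[x,y](U_{0})=U_{0}$ for all $x,y\in G$, so each automorphism $\mbox{gyr}[x,y]$ carries $\langle U_{0}\rangle$ onto $\langle\mbox{gyr}[x,y](U_{0})\rangle=H$; thus $\mbox{gyr}[x,y](H)=H$ for all $x,y\in G$, and in particular $H$ is an $L$-subgyrogroup. By \cite[Theorem 20]{ST} the left cosets $a\oplus H$ form a partition of $G$, each being a homeomorphic copy of $H$ which is open, hence also closed (its complement being a union of cosets). Therefore $G$ is a topological sum of clopen copies of $H$, and since a topological sum of paracompact spaces is paracompact, it suffices to prove that $H$ is paracompact.

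To handle $H$ I would record the absorption estimate $\overline{A}\subset A\oplus U_{0}$ for every $A\subset G$: if $x\in\overline{A}$ then $x\oplus U_{0}$ is a neighbourhood of $x$, so $x\oplus u\in A$ for some $u\in U_{0}$, and by the right cancellation law (item~(2) of Lemma~\ref{a}) $x=(x\oplus u)\oplus\mbox{gyr}[x,u](\ominus u)=(x\oplus u)\oplus(\ominus\mbox{gyr}[x,u](u))$, where $\ominus\mbox{gyr}[x,u](u)\in\mbox{gyr}[x,u](U_{0})=U_{0}$. Applied to $A=S_{n}$, and using $U_{0}\subset S_{n}$, this yields $\overline{S_{n}}\subset S_{n}\oplus U_{0}\subset S_{n+1}$; since $H$ is closed in $G$, it follows that $H=\bigcup_{n}\overline{S_{n}}$ is an increasing union of closed subsets with $\overline{S_{0}}=\overline{U_{0}}$ paracompact, and $\{S_{n+1}\}_{n}$ is an open cover of $H$ with $\overline{S_{n+1}}\subset S_{n+2}$. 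Granting that every $\overline{S_{n}}$ is paracompact, a standard refinement argument over such an expanding sequence of open sets with paracompact closures (the one used to show that $\sigma$-compact regular spaces are paracompact, with ``compact'' replaced by ``paracompact and closed'') produces a locally finite open refinement of any open cover of $H$, so $H$ is paracompact.

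Everything therefore comes down to the assertion that each $\overline{S_{n}}$ is paracompact, and this is where I expect the genuine difficulty. The naive induction breaks at the step ``$\overline{S_{n}}\oplus\overline{S_{n}}$ is paracompact'', for that set is merely a continuous image of the product space $\overline{S_{n}}\times\overline{S_{n}}$, and paracompactness is destroyed both by finite products and by continuous images, so the gyrogroup structure has to be brought in. The route I would try is the admissible-subgyrogroup machinery: refine $U_{0}$ inside $\mathscr U$ to a sequence $U_{0}\supset U_{1}\supset\cdots$ with $U_{n+1}\oplus(U_{n+1}\oplus U_{n+1})\subset U_{n}$, set $N=\bigcap_{n}U_{n}$ (a closed $L$-subgyrogroup, paracompact as a closed subspace of $\overline{U_{0}}$), observe that the Birkhoff--Kakutani-type construction underlying admissible subgyrogroups makes $G/N$ metrizable and hence $H/N$ (open in $G/N$) metrizable, and then attempt to lift paracompactness through the open continuous surjection $\pi_{N}:H\to H/N$, whose base is metrizable and whose fibres are copies of the paracompact space $N$. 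It is precisely this lifting of paracompactness through $\pi_{N}$ that carries the real weight; the rest is the routine bookkeeping above.
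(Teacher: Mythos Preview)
The paper does not prove Lemma~\ref{t00003}; it is quoted from \cite{BL2} without argument, so there is no proof in this paper against which to compare your proposal.

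On the proposal itself: the opening reduction is sound. Generating an open $L$-subgyrogroup $H=\langle U_{0}\rangle$ from a symmetric $U_{0}\in\mathscr U$ with paracompact closure, checking that $\mbox{gyr}[x,y](H)=H$ via invariance of $U_{0}$, and decomposing $G$ as a topological sum of clopen cosets of $H$ are all correct and constitute the natural strategy. The absorption estimate $\overline{A}\subset A\oplus U_{0}$ and the resulting chain $\overline{S_{n}}\subset S_{n+1}$ are also fine.

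The genuine gap is precisely where you locate it. You reduce paracompactness of $H$ to paracompactness of each $\overline{S_{n}}$, then propose to pass to an admissible $N=\bigcap_{n}U_{n}$ and lift paracompactness through $\pi_{N}:H\to H/N$ with metrizable base and paracompact fibres, but you stop short of actually carrying out that lifting and concede that it ``carries the real weight''. Since paracompactness is not in general inverse-preserved by open perfect-fibred maps without further hypotheses, this step needs a concrete argument, not a pointer. A second, smaller soft spot: the ``standard refinement argument'' you invoke to pass from paracompact $\overline{S_{n}}$ with $\overline{S_{n}}\subset S_{n+1}$ to paracompactness of $H$ is not literally the $\sigma$-compact argument and deserves either a reference or a spelled-out proof, as countable unions of closed paracompact subspaces are not automatically paracompact. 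As written, then, the proposal is a reasonable outline that isolates the hard step but does not close it.
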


\begin{lemma}\label{t00004}\cite{BD}
Every star-countable family $\mathcal{P}$ of subsets of a topological space $X$ can be expressed as $\mathcal{P}=\bigcup \{\mathcal{P} _{\alpha}:\alpha \in \Lambda\}$, where each subfamily $\mathcal{P}_{\alpha}$ is countable and $(\bigcup \mathcal{P} _{\alpha})\cap (\bigcup \mathcal{P} _{\beta})=\emptyset$ whenever $\alpha \not =\beta$.
\end{lemma}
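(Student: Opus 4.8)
The statement is purely combinatorial — the topology on $X$ plays no role — and the plan is to split $\mathcal{P}$ into its chain components. Recall that $\mathcal{P}$ being star-countable means precisely that for each $P\in\mathcal{P}$ the subfamily $\{Q\in\mathcal{P}:Q\cap P\neq\emptyset\}$ is countable. I would define a relation $\sim$ on $\mathcal{P}$ by declaring $P\sim Q$ whenever there is a finite chain $P=P_{0},P_{1},\dots,P_{n}=Q$ in $\mathcal{P}$ with $P_{i}\cap P_{i+1}\neq\emptyset$ for all $i<n$. This is visibly an equivalence relation (reflexivity via the trivial one-term chain, symmetry by reversing a chain, transitivity by concatenation), so it induces a partition $\mathcal{P}=\bigcup\{\mathcal{P}_{\alpha}:\alpha\in\Lambda\}$ into equivalence classes, and this partition is the decomposition I would use.

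The disjointness requirement is then almost immediate: if a point $x$ lay in both $\bigcup\mathcal{P}_{\alpha}$ and $\bigcup\mathcal{P}_{\beta}$, say $x\in P\cap Q$ with $P\in\mathcal{P}_{\alpha}$ and $Q\in\mathcal{P}_{\beta}$, then the one-step chain $P,Q$ gives $P\sim Q$, forcing $\alpha=\beta$. The real work is to show that each class $\mathcal{P}_{\alpha}$ is countable, and this is the step where star-countability enters. Fixing $\alpha$ and any $P\in\mathcal{P}_{\alpha}$, I would set inductively $\mathcal{Q}_{0}=\{P\}$ and $\mathcal{Q}_{m+1}=\{R\in\mathcal{P}:R\cap S\neq\emptyset\ \mbox{for some}\ S\in\mathcal{Q}_{m}\}$. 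For each fixed $S$ the subfamily $\{R\in\mathcal{P}:R\cap S\neq\emptyset\}$ is countable by hypothesis, so $\mathcal{Q}_{m+1}$ is a countable union of countable families, and an induction on $m$ gives that every $\mathcal{Q}_{m}$ is countable. It then remains to verify that $\mathcal{P}_{\alpha}=\bigcup_{m\in\omega}\mathcal{Q}_{m}$: each $\mathcal{Q}_{m}$ evidently consists of members of $\mathcal{P}$ that are $\sim$-related to $P$, and conversely any $Q\in\mathcal{P}_{\alpha}$ is linked to $P$ by a chain of some finite length $n$, so a short induction along that chain places $Q$ in $\mathcal{Q}_{n}$. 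Hence $\mathcal{P}_{\alpha}$ is countable, which finishes the proof.

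I do not expect a genuine obstacle here; this is the standard ``connected-component'' decomposition trick. The only points that require a little care are checking that the sets $\mathcal{Q}_{m}$ really do exhaust the whole chain component — so that no member of $\mathcal{P}_{\alpha}$ is omitted — and that countability is preserved at each stage of the recursion, both of which are routine once the chain relation $\sim$ is in place.
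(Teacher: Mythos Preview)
Your argument is correct and is the standard chain-component proof of this fact. Note, however, that the paper does not supply its own proof of this lemma: it is simply quoted from Burke's Handbook chapter \cite{BD}, so there is nothing in the paper to compare your approach against.
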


\begin{theorem}\label{3dl3}
Let $G$ be a strongly topological gyrogroup with a symmetric neighborhood base $\mathscr U$ at $0$ and let $H$ be a second-countable admissible subgyrogroup generated from $\mathscr U$. If the quotient space $G/H$ is a local $\aleph_{0}$-space (resp., locally cosmic space), then $G$ is a topological sum of $\aleph_{0}$-subspace (resp., cosmic subspaces).
\end{theorem}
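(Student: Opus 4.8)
The plan is to reduce the global statement to a local one using the fact that $H$ is a closed $L$-subgyrogroup and $\pi$ is open and continuous, and then to glue local pieces together along the cosets of an admissible subgyrogroup. First I would fix the admissible sequence $\{U_n : n \in \omega\} \subset \mathscr U$ with $U_{n+1} \oplus (U_{n+1} \oplus U_{n+1}) \subset U_n$ and $H = \bigcap_{n} U_n$. The key structural observation is that $U := U_0$ (or some member of $\mathscr U$) carries the ``local'' $\aleph_0$-property of $G/H$ up to $G$: since $G/H$ is a local $\aleph_0$-space, for the point $\pi(0)$ there is an open neighborhood $V$ of $\pi(0)$ in $G/H$ that is an $\aleph_0$-subspace, and I would shrink $U$ so that $\pi(U) \subset V$, noting $\pi(U)$ is open by Lemma~\ref{t00000}. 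Then $\pi(U)$ is an $\aleph_0$-space (a subspace of an $\aleph_0$-space), and it is separable by Remark~2.

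Next I would show $U$ itself is an $\aleph_0$-space. Applying Proposition~\ref{t00002} to the separable subset $Y = \pi(U)$ of $G/H$ (using that $H$ is second-countable, hence closed and separable), $\pi^{-1}(\pi(U))$ is separable in $G$; since $U \subset \pi^{-1}(\pi(U))$, the subspace $U$ is separable. Now $U$ is a separable space; to upgrade ``separable'' to ``$\aleph_0$-space'' I would build a countable $cs$-network (equivalently $k$-network, by Remark~2 and the quoted result of \cite{LS}) for $U$ out of three ingredients: a countable $cs$-network for the $\aleph_0$-space $\pi(U)$ pulled back via $\pi$, a countable base for the second-countable fiber $H$ translated along a countable dense subset of $U$, and the ``refinement by a countable family of $\mathscr U$-translates'' technique standard in strongly topological gyrogroups. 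Concretely, for $x$ in a countable dense set $D \subset U$ and $m,n \in \omega$, sets of the form $(x \oplus U_m) \cap \pi^{-1}(P)$ with $P$ in the $cs$-network of $\pi(U)$, together with $x \oplus B$ for $B$ in the countable base of $H$, should form a countable $cs$-network: given $x_k \to x$ in $U$ and a neighborhood $O$ of $x$, I pick $U_m$ with $x \oplus U_m \oplus U_m \subset$ (a translate covering) $O$, use that $\pi(x_k) \to \pi(x)$ to trap a tail inside $\pi^{-1}(P)$, and use near-density to replace $x$ by a point of $D$; the gyration invariance $\mathrm{gyr}[a,b](U_m) = U_m$ is exactly what makes the translation estimates work without the group law.

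Finally I would pass from the local conclusion to the global one. The admissible subgyrogroup $H'$ generated by $\mathscr U$ (one can take $H' = H$, or a suitable member-generated admissible subgyrogroup) partitions $G$ into left cosets $a \oplus H'$, and more usefully the ``big'' member $U_0 \in \mathscr U$ generates an open-and-closed subgyrogroup $G_0 = \bigcup_n n\text{-fold sums of } U_0$ whose cosets are open, closed, and pairwise homeomorphic. Each coset, being homeomorphic to $G_0$, is covered by translates of $U$ and hence is an $\aleph_0$-space (a countable union of open $\aleph_0$-subspaces is an $\aleph_0$-space, using that $\aleph_0$-ness is inherited by open subspaces and preserved by countable unions of a point-countable-network-admitting family), and distinct cosets are clopen and disjoint, so $G = \bigoplus_\alpha (a_\alpha \oplus G_0)$ is a topological sum of $\aleph_0$-subspaces. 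The cosmic case is identical with ``$k$-network'' replaced by ``network'' throughout, using Lemma~\ref{t00003} to ensure paracompactness is not an obstruction.

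\textbf{Main obstacle.} I expect the hard part to be verifying that the candidate countable family on $U$ is genuinely a $cs$-network (resp.\ $k$-network) rather than merely a network: the $k$-network condition must be checked against arbitrary compact sets, where one cannot reduce to a single translate, so one needs the star-countable/point-countable refinement machinery of Lemma~\ref{t00004} and a careful use of the approximation $U_{n+1} \oplus (U_{n+1} \oplus U_{n+1}) \subset U_n$ together with $\mathrm{gyr}[x,y](U_n) = U_n$ to localize the compact set into finitely many cosets and finitely many $\mathscr U$-translates simultaneously. Ensuring the resulting family is still \emph{star-countable} globally (not just countable on each clopen piece) is where Lemma~\ref{t00004} is essential, since the topological-sum decomposition automatically makes a family that is countable on each summand star-countable on the whole space.
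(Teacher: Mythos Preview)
Your local step---showing that some open neighborhood of $0$ is an $\aleph_0$-space by intersecting pullbacks of a countable $cs$-network on $\pi(U)$ with translates $b_m\oplus V_n$ over a countable dense set---is essentially the paper's Claim~1, and the detailed gyro-calculations you anticipate are exactly the paper's Subclaims~1 and~2. So that part is on track, even if the extra ingredient ``$x\oplus B$ for $B$ in the base of $H$'' is unnecessary.

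The genuine gap is in your global step. You propose to form $G_0=\bigcup_n(n\text{-fold sums of }U_0)$, declare it an open-and-closed subgyrogroup whose left cosets decompose $G$ as a topological sum, and assert each coset is an $\aleph_0$-space because it is ``covered by translates of $U$'' and hence is ``a countable union of open $\aleph_0$-subspaces''. Two things fail here. First, in a gyrogroup there is no reason the set of finite $\oplus$-words in $U_0$ is a subgyrogroup, let alone an $L$-subgyrogroup (only $L$-subgyrogroups have the coset-partition property you need); the gyration invariance $\mathrm{gyr}[x,y](U_0)=U_0$ controls individual reparenthesizations but does not give you closure under $\oplus$ or the $L$-condition. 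Second, and more fundamentally, even if $G_0$ were a clopen $L$-subgyrogroup, being covered by translates of $U$ does \emph{not} make it a \emph{countable} union of $\aleph_0$-subspaces: $G_0$ can be enormous (think of a non-separable topological group, where the open subgroup generated by any neighborhood of the identity is the whole group). So your inference ``covered by translates of $U$ $\Rightarrow$ $\aleph_0$-space'' is invalid.

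The paper closes this gap differently: from ``$G$ is locally an $\aleph_0$-space'' it gets ``$G$ is locally paracompact'', hence paracompact by Lemma~\ref{t00003}; then it refines the open cover by $\aleph_0$-subspaces to a locally finite one $\mathcal A$. Because each member of $\mathcal A$ is separable (being an $\aleph_0$-space), $\mathcal A$ is star-countable, and now Lemma~\ref{t00004} decomposes $\mathcal A$ into pairwise disjoint countable subfamilies $\mathcal B_\alpha$; each $X_\alpha=\bigcup\mathcal B_\alpha$ is then a \emph{countable} union of open $\aleph_0$-subspaces and hence an $\aleph_0$-space, giving $G=\bigoplus_\alpha X_\alpha$. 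You cite Lemmas~\ref{t00003} and~\ref{t00004} only in passing for secondary purposes; in fact they are precisely the tools that replace your unjustified ``$G_0$'' decomposition.
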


\begin{proof}
We just prove the case of $\aleph_{0}$-space, and the case of cosmic space is similar.

Let $\mathscr{U}$ be a symmetric neighborhood base at $0$ such that $gyr[x, y](U)=U$ for any $x, y\in G$ and $U\in\mathscr{U}$. Since $H$ is an admissible subgyrogroup generated from $\mathscr U$, there exits a sequence $\{U_{n}:n\in \omega\}$ of open symmetric neighborhoods of the identity $0$ in $G$ such that $U_{n}\in \mathscr U$, $U_{n+1}\oplus (U_{n+1}\oplus U_{n+1})\subset U_{n}$ for each $n\in \omega$ and $H=\bigcap _{n\in \omega}U_{n}$. By the hypothesis, the quotient space $G/H$ is a local $\aleph_{0}$-space. Then we can find an open neighborhood $Y$ of $H$ in $G/H$ such that $Y$ has a countable $cs$-network. Put $X=\pi ^{-1}(Y)$. Since the natural homomorphism $\pi$ from $G$ onto $G/H$ is an open and continuous mapping, $X$ is an open neighborhood of the identity element $0$ in $G$. It follows from Proposition \ref{t00002} that $X$ is separable. Therefore, there is countable subset $B=\{b_{m}:m\in \mathbb{N}\}$ of $X$ such that $\overline{B}=X$.

By the first-countability of $H$, there exists a countable family $\{V_{n}:n\in \mathbb{N}\}\subset \mathscr U$ of open symmetric neighborhoods of $0$ in $G$ such that $V_{n+1}\oplus (V_{n+1}\oplus V_{n+1})\subset V_{n}\subset X$ for each $n\in \mathbb{N}$ and the family $\{V_{n}\cap H:n\in \mathbb{N}\}$ is a local base at $0$ for $H$. Since $Y$ is an $\aleph_{0}$-space, there is a countable $cs$-network $\{\mathcal{P}_{k}:k\in \mathbb{N}\}$ for $Y$.

{\bf Claim 1.} $X$ is an $\aleph_{0}$-space.

Put $\mathcal{F}=\{\pi ^{-1}(P_{k})\cap (b_{m}\oplus V_{n}):k,m,n\in \mathbb{N}\}$. Then $\mathcal{F}$ is a countable family of subsets of $X$. Suppose that $\{x_{i}\}_{i}$ is a sequence converging to a point $x$ in $X$ and $U$ be a neighborhood of $x$ in $X$. Then $U$ is also a neighborhood of $x$ in $G$. Let $V$ be an open neighborhood of $0$ in $G$ such that $x\oplus (V\oplus V)\subset U$. Since $\{V_{n}\cap H:n\in \mathbb{N}\}$ is a local base at $0$ for $H$, there is $n\in \mathbb{N}$ such that $V_{n}\cap H\subset V\cap H$. Moreover, $(x\oplus V_{n+1})\cap X$ is a non-empty open subset of $X$ and $\overline{B}=X$, whence $B\cap (x\oplus V_{n+1})\not =\emptyset$. Therefore, there exists $b_{m}\in B$ such that $b_{m}\in x\oplus V_{n+1}$. Furthermore, $(x\oplus V_{n+1})\cap (x\oplus V)$ is an open neighborhood of $x$ and $\pi :G\rightarrow G/H$ is an open mapping, so $\pi ((x\oplus V_{n+1})\cap (x\oplus V))$ is an open neighborhood of $\pi (x)$ in the space $Y$ and the sequence $\{\pi (x_{i})\}_{i}$ converges to $\pi (x)$ in $Y$. It is obtained that $$\{\pi (x)\}\cup \{\pi (x_{i}):i\geq i_{0}\}\subset P_{k}\subset \pi ((x\oplus V_{n+1})\cap (x\oplus V))~~for ~~some~~i_{0},k\in \mathbb{N}.$$

{\bf Subclaim 1.} $(x\oplus V_{n+1})\cap (x\oplus V)=x\oplus (V_{n+1}\cap V)$.

For every $t\in (x\oplus V_{n+1})\cap (x\oplus V)$, there are $u_{1}\in V_{n+1},v_{1}\in V$ such that $t=x\oplus u_{1}=x\oplus v_{1}$. By Lemma \ref{a}, $u_{1}=\ominus x\oplus (x\oplus u_{1})=\ominus x\oplus (x\oplus v_{1})=v_{1}$. Therefore, $(x\oplus V_{n+1})\cap (x\oplus V)\subset x\oplus (V_{n+1}\cap V)$.

On the contrary, for any $s\in x\oplus (V_{n+1}\cap V)$, there is $u_{2}\in V_{n+1}\cap V$ such that $s=x\oplus u_{2}$. It is obvious that $s\in (x\oplus V_{n+1})\cap (x\oplus V)$, that is, $x\oplus (V_{n+1}\cap V)\subset (x\oplus V_{n+1})\cap (x\oplus V)$. Hence, $(x\oplus V_{n+1})\cap (x\oplus V)=x\oplus (V_{n+1}\cap V)$.

{\bf Subclaim 2.} $\pi ^{-1}(P_{k})\cap (b_{m}\oplus V_{n+1})\subset U$.

For an arbitrary $z\in \pi ^{-1}(P_{k})\cap (b_{m}\oplus V_{n+1})$, $\pi (z)\in P_{k}\subset \pi ((x\oplus V_{n+1})\cap (x\oplus V))$. Then, since $z\in ((x\oplus V_{n+1})\cap (x\oplus V))\oplus H=(x\oplus (V_{n+1}\cap V))\oplus H$, and $H$ is an admissible subgyrogroup generated from $\mathscr U$, we have
\begin{eqnarray}
z&\in&(x\oplus (V_{n+1}\cap V))\oplus H\nonumber\\
&=&x\oplus ((V_{n+1}\cap V)\oplus gyr[(V_{n+1}\cap V),x](H))\nonumber\\
&=&x\oplus ((V_{n+1}\cap V)\oplus gyr[(V_{n+1}\cap V),x](\bigcap_{m\in \mathbb{N}}U_{m}))\nonumber\
\end{eqnarray}

\begin{eqnarray}
&\subset&x\oplus ((V_{n+1}\cap V)\oplus \bigcap_{m\in \mathbb{N}}gyr[(V_{n+1}\cap V),x](U_{m}))\nonumber\\
&=&x\oplus ((V_{n+1}\cap V)\oplus \bigcap_{m\in \mathbb{N}}U_{m})\nonumber\\
&=&x\oplus ((V_{n+1}\cap V)\oplus H).\nonumber\
\end{eqnarray}

Therefore, $\ominus x\oplus z\in (V_{n+1}\cap V)\oplus H$. Moreover, since $z\in b_{m}\oplus V_{n+1}$ and $b_{m}\in x\oplus V_{n+1}$, it follows that
\begin{eqnarray}
z&\in&(x\oplus V_{n+1})\oplus V_{n+1}\nonumber\\
&=&x\oplus (V_{n+1}\oplus gyr[V_{n+1},x](V_{n+1}))\nonumber\\
&=&x\oplus (V_{n+1}\oplus V_{n+1}).\nonumber
\end{eqnarray}
So, $(\ominus x)\oplus z\in V_{n+1}\oplus V_{n+1}$. Hence, $(\ominus x)\oplus z\in ((V_{n+1}\cap V)\oplus H)\cap (V_{n+1}\oplus V_{n+1})$. There exist $a\in (V_{n+1}\cap V),~h\in H$ and $u_{3},v_{3}\in V_{n+1}$ such that $(\ominus x)\oplus z=a\oplus h=u_{3}\oplus v_{3}$, whence $h=(\ominus a)\oplus (u_{3}\oplus v_{3})\in V_{n+1}\oplus (V_{n+1}\oplus V_{n+1})\subset V_{n}$. Therefore, $(\ominus x)\oplus z\in (V_{n+1}\cap V)\oplus (V_{n}\cap H)$, and consequently, $z\in x\oplus ((V_{n+1}\cap V)\oplus (V_{n}\cap H))\subset x\oplus (V\oplus V)\subset U$.

Since $b_{m}\in x\oplus V_{n+1}$, there is $u\in V_{n+1}$ such that $b_{m}=x\oplus u$, whence
\begin{eqnarray}
x&=&(x\oplus u)\oplus gyr[x,u](\ominus u)\nonumber\\
&=&b_{m}\oplus gyr[x,u](\ominus u)\nonumber\\
&\in&b_{m}\oplus gyr[x,u](V_{n+1})\nonumber\\
&=&b_{m}\oplus V_{n+1}.\nonumber
\end{eqnarray}
Therefore, there exists $i_{1}\geq i_{0}$ such that $x_{i}\in b_{m}\oplus V_{n+1}$ when $i\geq i_{1}$, whence $\{x\}\cup \{x_{i}:i\geq i_{1}\}\subset \pi ^{-1}(P_{k})\cap (b_{m}\oplus V_{n+1})$. Thus $\mathcal{F}$ is a countable $cs$-network for $X$, and we complete the proof of Claim 1.

Since strongly topological gyrogroup $G$ is homogeneous, $G$ is a local $\aleph_{0}$-space by Claim 1. Therefore, $G$ is a locally paracompact space. Furthermore, since every locally paracompact strongly topological gyrogroup is paracompact by Lemma \ref{t00003}, $G$ is paracompact. Let $\mathcal{A}$ is an open cover of $G$ by $\aleph_{0}$-subspace. Because the property of being an $\aleph_{0}$-space is hereditary, we can assume that $\mathcal{A}$ is locally finite in $G$ by the paracompactness of $G$. Moreover, as every point-countable family of open subsets in a separable space is countable, the family $\mathcal{A}$ is star-countable. Then $\mathcal{A}=\bigcup \{\mathcal{B}_{\alpha}:\alpha \in \Lambda\}$ by Lemma \ref{t00004}, where each subfamily $\mathcal{B}_{\alpha}$ is countable and $(\bigcup \mathcal{B}_{\alpha})\cap (\bigcup \mathcal{B}_{\beta})=\emptyset$ whenever $\alpha \not =\beta$. Set $X_{\alpha}=\bigcup \mathcal{B}_{\alpha}$ for each $\alpha \in \Lambda$. Then $G=\bigoplus_{\alpha \in \Lambda}X_{\alpha}$.

{\bf Claim 2.} $X_{\alpha}$ is an $\aleph_{0}$-subspace for each $\alpha \in \Lambda$.

Put $\mathcal{B}_{\alpha}=\{B_{\alpha ,n}:n\in \mathbb{N}\}$, where each $B_{\alpha ,n}$ is an open $\aleph_{0}$-subspace of $G$, and put $\mathcal{P}_{\alpha}=\bigcup _{n\in \mathbb{N}}\mathcal{P}_{\alpha ,n}$, where $\mathcal{P}_{\alpha ,n}$ is a countable $cs$-network for the $\aleph_{0}$-space $B_{\alpha ,n}$ for each $n\in \mathbb{N}$. Then $\mathcal{P}_{\alpha}$ is a countable $cs$-network for $X_{\alpha}$. Thus, $X_{\alpha}$ is an $\aleph_{0}$-space.

In conclusion, the strongly topological gyrogroup $G$ is a topological sum of $\aleph_{0}$-subspaces.
\end{proof}

\begin{corollary}
Let $G$ be a strongly topological gyrogroup with a symmetric neighborhood base $\mathscr U$ at $0$ and let $H$ be a second-countable admissible subgyrogroup generated from $\mathscr U$. If the quotient space $G/H$ is an $\aleph_{0}$-space (resp., cosmic space), $G$ is also an $\aleph_{0}$-space (resp., cosmic space).
\end{corollary}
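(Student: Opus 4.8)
The plan is to derive the statement directly from Theorem~\ref{3dl3} together with the separability of $G$. First note that an $\aleph_{0}$-space (resp.\ a cosmic space) is trivially a local $\aleph_{0}$-space (resp.\ locally cosmic space), the whole space being an open neighbourhood of each of its points. Hence Theorem~\ref{3dl3} applies and yields a decomposition $G=\bigoplus_{\alpha\in\Lambda}X_{\alpha}$ into a topological sum of $\aleph_{0}$-subspaces (resp.\ cosmic subspaces), where each $X_{\alpha}$ is a non-empty clopen subset of $G$.

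Next I would show that $\Lambda$ is countable by using that $G$ is separable. By Remark~2, the $\aleph_{0}$-space (resp.\ cosmic space) $G/H$ is cosmic, hence separable. Since $H$ is second-countable it is separable, and, being an admissible subgyrogroup generated from $\mathscr U$, it is closed in $G$. Therefore Proposition~\ref{t00002}, applied with $Y=G/H$, gives that $\pi^{-1}(G/H)=G$ is separable. If $D$ is a countable dense subset of $G$, then $D\cap X_{\alpha}\neq\emptyset$ for every $\alpha\in\Lambda$, since each $X_{\alpha}$ is non-empty and open; as the $X_{\alpha}$ are pairwise disjoint, $\Lambda$ is countable.

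It then remains to verify the routine fact that a countable topological sum of $\aleph_{0}$-spaces (resp.\ cosmic spaces) is again an $\aleph_{0}$-space (resp.\ cosmic space). For each $\alpha\in\Lambda$ fix a countable $cs$-network $\mathcal{P}_{\alpha}$ for $X_{\alpha}$ (resp.\ a countable network), and put $\mathcal{P}=\bigcup_{\alpha\in\Lambda}\mathcal{P}_{\alpha}$, a countable family. If a sequence $\{x_{i}\}_{i}$ converges to $x$ in $G$ and $U$ is a neighbourhood of $x$, then $x$ lies in exactly one summand $X_{\alpha_{0}}$, the sequence is eventually in $X_{\alpha_{0}}$, and $U\cap X_{\alpha_{0}}$ is a neighbourhood of $x$ in $X_{\alpha_{0}}$; hence some $P\in\mathcal{P}_{\alpha_{0}}\subseteq\mathcal{P}$ contains $\{x\}\cup\{x_{i}:i\geq i_{0}\}$ for some $i_{0}$ and satisfies $P\subseteq U\cap X_{\alpha_{0}}\subseteq U$. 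Thus $\mathcal{P}$ is a $cs$-network for $G$ (the case of a network is entirely analogous), and since a topological sum of regular spaces is regular, $G$ is regular. By the characterisation of $\aleph_{0}$-spaces via countable $cs$-networks recalled in Remark~2, $G$ is an $\aleph_{0}$-space (resp.\ a cosmic space), as required.

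Since the substantial content is entirely contained in the already-proven Theorem~\ref{3dl3}, there is no real obstacle; the only steps that need a moment's care are checking that $H$ satisfies the hypotheses of Proposition~\ref{t00002} (closedness and separability) and the elementary behaviour of $cs$-networks and networks under countable topological sums. Alternatively, one could bypass the topological-sum decomposition altogether and simply rerun the argument of Claim~1 in the proof of Theorem~\ref{3dl3} with $Y=G/H$ and $X=\pi^{-1}(Y)=G$, which already delivers a countable $cs$-network for $G$.
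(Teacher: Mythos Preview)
Your proposal is correct. The paper states this corollary immediately after Theorem~\ref{3dl3} without any proof, so there is no explicit argument to compare against; the authors evidently regard it as immediate.

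Of the two routes you describe, the alternative one (simply rerunning Claim~1 of Theorem~\ref{3dl3} with $Y=G/H$ and $X=\pi^{-1}(Y)=G$) is the cleaner and is almost certainly what the authors had in mind: Claim~1 already shows that $\pi^{-1}(Y)$ is an $\aleph_{0}$-space whenever $Y$ is an open $\aleph_{0}$-subspace of $G/H$, and taking $Y=G/H$ finishes the job in one line. Your main argument (applying the full conclusion of Theorem~\ref{3dl3} and then using separability via Proposition~\ref{t00002} to force $\Lambda$ to be countable) is also valid but slightly more circuitous, since it passes through the topological-sum decomposition only to collapse it again. Either way, the substance is entirely contained in the proof of Theorem~\ref{3dl3}, and your verification of the auxiliary points (closedness and separability of $H$, behaviour of $cs$-networks under countable sums) is sound.
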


\begin{theorem}\label{3dl2}
Let $G$ be a strongly topological gyrogroup with a symmetric neighborhood base $\mathscr U$ at $0$ and let $H$ be a second-countable admissible subgyrogroup generated from $\mathscr U$. If the quotient space $G/H$ has a star-countable $cs$-network, $G$ also has a star-countable $cs$-network.
\end{theorem}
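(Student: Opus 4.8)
The plan is to carry out the construction of a countable $cs$-network from Claim~1 of the proof of Theorem~\ref{3dl3} over each ``piece'' of the given star-countable $cs$-network of $G/H$ and then to amalgamate the pieces, the novelty being that the pieces need not be open. Keep the notation $\mathscr{U}$, $\{U_{n}:n\in\omega\}$, $H=\bigcap_{n\in\omega}U_{n}$, with $gyr[x,y](U)=U$ for all $x,y\in G$ and $U\in\mathscr{U}$. Let $\mathcal{P}$ be a star-countable $cs$-network for $G/H$. First I would apply Lemma~\ref{t00004} to write $\mathcal{P}=\bigcup\{\mathcal{P}_{\alpha}:\alpha\in\Lambda\}$ with each $\mathcal{P}_{\alpha}$ countable and the sets $Y_{\alpha}:=\bigcup\mathcal{P}_{\alpha}$ pairwise disjoint; since a $cs$-network is a network (apply the definition to constant sequences), $\{Y_{\alpha}:\alpha\in\Lambda\}$ is a partition of $G/H$, and every nonempty member of $\mathcal{P}$ lies in exactly one $\mathcal{P}_{\alpha}$ and is then contained in $Y_{\alpha}$. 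Each $Y_{\alpha}$ has the countable network $\mathcal{P}_{\alpha}$, hence is separable, so $X_{\alpha}:=\pi^{-1}(Y_{\alpha})$ is separable in $G$ by Proposition~\ref{t00002} (recall $H$ is a closed separable $L$-subgyrogroup); fix a countable dense subset $B_{\alpha}\subseteq X_{\alpha}$. The sets $X_{\alpha}$ partition $G$. Using the second-countability of $H$, fix $\{V_{n}:n\in\mathbb{N}\}\subseteq\mathscr{U}$, open and symmetric, with $V_{n+1}\oplus(V_{n+1}\oplus V_{n+1})\subseteq V_{n}$ for each $n$ and $\{V_{n}\cap H:n\in\mathbb{N}\}$ a local base at $0$ for $H$. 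The candidate network is $\mathcal{F}=\bigcup_{\alpha\in\Lambda}\mathcal{F}_{\alpha}$, where $\mathcal{F}_{\alpha}=\{\pi^{-1}(P)\cap(b\oplus V_{n}):P\in\mathcal{P}_{\alpha},\ b\in B_{\alpha},\ n\in\mathbb{N}\}$; each $\mathcal{F}_{\alpha}$ is countable, since each factor in its definition is.

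To see $\mathcal{F}$ is a $cs$-network for $G$, the key point, replacing the hypothesis that $Y$ is open used in Theorem~\ref{3dl3}, is that convergence localizes. If $x_{i}\to x$ in $G$, then $\pi(x_{i})\to\pi(x)$ in $G/H$, so applying the $cs$-network property of $\mathcal{P}$ with the whole space as the neighborhood yields $i_{0}\in\mathbb{N}$, $\alpha_{0}\in\Lambda$ and $Q\in\mathcal{P}_{\alpha_{0}}$ with $\{\pi(x)\}\cup\{\pi(x_{i}):i\ge i_{0}\}\subseteq Q\subseteq Y_{\alpha_{0}}$; hence $x$ and all $x_{i}$ with $i\ge i_{0}$ lie in the single separable set $X_{\alpha_{0}}$. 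Given a neighborhood $U$ of $x$ in $G$, I would choose an open $V\ni0$ with $x\oplus(V\oplus V)\subseteq U$ and $n\in\mathbb{N}$ with $V_{n}\cap H\subseteq V\cap H$; since $(x\oplus V_{n+1})\cap X_{\alpha_{0}}$ is a nonempty relatively open subset of $X_{\alpha_{0}}$ and $B_{\alpha_{0}}$ is dense there, pick $b_{m}\in B_{\alpha_{0}}\cap(x\oplus V_{n+1})$. Since $\pi$ is open (Lemma~\ref{t00000}), $\pi((x\oplus V_{n+1})\cap(x\oplus V))$ is an open neighborhood of $\pi(x)$, so the $cs$-network property of $\mathcal{P}$ gives $i_{1}\ge i_{0}$ and $P_{k}\in\mathcal{P}$ with $\{\pi(x)\}\cup\{\pi(x_{i}):i\ge i_{1}\}\subseteq P_{k}\subseteq\pi((x\oplus V_{n+1})\cap(x\oplus V))$; because $\pi(x)\in P_{k}\cap Y_{\alpha_{0}}$ and the $Y_{\alpha}$ are disjoint, $P_{k}\in\mathcal{P}_{\alpha_{0}}$.

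From here I would invoke verbatim the gyrocalculus of Subclaims~1 and~2 in the proof of Theorem~\ref{3dl3}: using the left and right cancellation laws (Lemma~\ref{a}) and the fact that each $gyr[\,\cdot\,,\,\cdot\,]$ fixes every $V_{n}$ and every $U_{m}$, hence fixes $H$, one gets $(x\oplus V_{n+1})\cap(x\oplus V)=x\oplus(V_{n+1}\cap V)$, then $\pi^{-1}(P_{k})\cap(b_{m}\oplus V_{n+1})\subseteq x\oplus\big((V_{n+1}\cap V)\oplus(V_{n}\cap H)\big)\subseteq x\oplus(V\oplus V)\subseteq U$, and $x\in b_{m}\oplus V_{n+1}$, whence $x_{i}\in b_{m}\oplus V_{n+1}$ for all large $i$. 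Thus $\{x\}\cup\{x_{i}:i\ge i^{*}\}\subseteq\pi^{-1}(P_{k})\cap(b_{m}\oplus V_{n+1})\subseteq U$ for a suitable $i^{*}$, and $\pi^{-1}(P_{k})\cap(b_{m}\oplus V_{n+1})\in\mathcal{F}_{\alpha_{0}}\subseteq\mathcal{F}$, so $\mathcal{F}$ is a $cs$-network for $G$. Finally I would check star-countability: every member of $\mathcal{F}_{\alpha}$ is contained in $X_{\alpha}$ (as $\pi^{-1}(P)\subseteq\pi^{-1}(Y_{\alpha})$ for $P\in\mathcal{P}_{\alpha}$), the $X_{\alpha}$ are pairwise disjoint, so a given member of $\mathcal{F}$ can meet only members of the single countable family $\mathcal{F}_{\alpha}$ to which it belongs; in particular $\mathcal{F}$ is star-countable.

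The only genuinely new difficulty compared with Theorem~\ref{3dl3} is the localization in the second paragraph: the $V_{n}$-refinement argument is inherently local, and what licenses it here is not any openness of the pieces $Y_{\alpha}$ (which may fail) but precisely the defining property of a $cs$-network, which drives a tail of every convergent sequence, together with its limit, into a single member of $\mathcal{P}$ and therefore into a single $Y_{\alpha}$. Everything else is the gyrogroup bookkeeping already done in Theorem~\ref{3dl3} together with the routine verification that the countable pieces $\mathcal{P}_{\alpha}$ feed correctly into Proposition~\ref{t00002} to produce the separable sets $X_{\alpha}$ and their dense subsets.
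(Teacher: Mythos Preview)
Your proof is correct and follows the same overall template as the paper (lift the $cs$-network via separable preimages, build sets of the form $\pi^{-1}(P)\cap(b\oplus V_{n})$, and verify the $cs$-network property by the gyrocalculus of Theorem~\ref{3dl3}), but the organization differs in one genuine respect. You first invoke Lemma~\ref{t00004} to decompose $\mathcal{P}$ into disjoint countable blocks $\mathcal{P}_{\alpha}$ with supports $Y_{\alpha}$, take one countable dense set $B_{\alpha}$ per block $X_{\alpha}=\pi^{-1}(Y_{\alpha})$, and then use the extra ``localization'' step (a tail of any convergent sequence, together with its limit, lands in a single $Y_{\alpha_{0}}$) to ensure the witnessing element lies in $\mathcal{F}_{\alpha_{0}}$. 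The paper does \emph{not} use Lemma~\ref{t00004} here: it works member by member, observing that each $P_{\alpha}\in\mathcal{P}$ is separable because $\{P_{\alpha}\cap P_{\beta}:\beta\in\Lambda\}$ is a countable $wcs^{*}$-network for $P_{\alpha}$, takes a countable dense $B_{\alpha}\subseteq\pi^{-1}(P_{\alpha})$, and sets $\mathcal{F}=\{\pi^{-1}(P_{\alpha})\cap(b_{\alpha,m}\oplus V_{n})\}$; star-countability of $\mathcal{F}$ then follows directly from star-countability of $\mathcal{P}$ (two such sets can meet only if the underlying $P_{\alpha}$'s meet). Your route makes star-countability automatic via disjointness of the $X_{\alpha}$ at the cost of the localization argument; the paper's route avoids both Lemma~\ref{t00004} and the localization step but must argue separability of each $P_{\alpha}$ from star-countability and check star-countability of $\mathcal{F}$ by hand. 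Both are short and valid; they simply trade one small verification for another.
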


\begin{proof}
Let $\mathscr{U}$ be a symmetric neighborhood base at $0$ such that $gyr[x, y](U)=U$ for any $x, y\in G$ and $U\in\mathscr{U}$. Since $H$ is an admissible subgyrogroup generated from $\mathscr U$, there exits a sequence $\{U_{n}:n\in \omega\}$ of open symmetric neighborhoods of the identity $0$ in $G$ such that $U_{n}\in \mathscr U$, $U_{n+1}\oplus (U_{n+1}\oplus U_{n+1})\subset U_{n}$ for each $n\in \omega$ and $H=\bigcap _{n\in \omega}U_{n}$. Since the $L$-subgyrogroup $H$ of $G$ is first-countable at the identity element $0$ of $G$, there exists a countable family $\{V_{n}:n\in \mathbb{N}\}\subset \mathscr U$ such that $(V_{n+1}\oplus (V_{n+1}\oplus V_{n+1}))\subset V_{n}$ for each $n\in \mathbb{N}$ and the family $\{V_{n}\cap H:n\in \mathbb{N}\}$ is a local base at $0$ for $H$.

Let $\mathcal{P}=\{P_{\alpha}:\alpha \in \Lambda\}$ be a star-countable $cs$-network for the space $G/H$. For each $\alpha \in \Lambda$, the family $\{P_{\alpha}\cap P_{\beta}:\beta \in \Lambda\}$ is a countable $wcs^{*}$-network for $P_{\alpha}$. Therefore, $P_{\alpha}$ is a cosmic space, and $P_{\alpha}$ is separable. Then it follows from Proposition \ref{t00002} that $\pi^{-1}(P_{\alpha})$ is separable. We can find a countable subset $B_{\alpha}=\{b_{\alpha ,m}:m\in \mathbb{N}\}$ of $\pi^{-1}(P_{\alpha})$ such that $\overline{B_{\alpha}}=\pi^{-1}(P_{\alpha})$.

Put $$\mathcal{F}=\{\pi ^{-1}(P_{\alpha})\cap (b_{\alpha ,m}\oplus V_{n}):\alpha \in \Lambda ,~~and~~m,n\in \mathbb{N}\}.$$ Then $\mathcal{F}$ is a star-countable family of $G$.

{\bf Claim.} $\mathcal{F}$ is a $cs$-network for $G$.

Let $\{x_{i}\}_{i}$ be a sequence converging to a point $x$ in $G$ and let $U$ be a neighborhood of $x$ in $G$. Choose an open neighborhood $V$ of $0$ in $G$ such that $(x\oplus (V\oplus V))\subset U$. Since $\{V_{n}\cap H:n\in \mathbb{N}\}$ is a local base at $0$ for $H$, there exists $n\in \mathbb{N}$ such that $V_{n}\cap H\subset V\cap H$. Since $\pi :G\rightarrow G/H$ is an open and continuous mapping, there are $i_{0}\in \mathbb{N}$ and $\alpha \in \Lambda$ such that $\{\pi (x)\}\cup \{\pi (x_{i}):i\geq i_{0}\}\subset P_{\alpha}\subset \pi ((x\oplus V_{n+1})\cap (x\oplus V))$. Since $x\in \pi^{-1}(P_{\alpha})$, $(x\oplus V_{n+1})\cap \pi^{-1}(P_{\alpha})$ is non-empty and open in the subspace $\pi^{-1}(P_{\alpha})$. Moreover, since $\overline{B_{\alpha}}=\pi^{-1}(P_{\alpha})$, there exists $m\in \mathbb{N}$ such that $b_{\alpha ,m}\in x\oplus V_{n+1}$.

{\bf Subclaim.} $\pi^{-1}(P_{\alpha})\cap (b_{\alpha ,m}\oplus V_{n+1})\subset U$.

For an arbitrary $z\in \pi^{-1}(P_{\alpha})\cap (b_{\alpha ,m}\oplus V_{n+1})$, $\pi (z)\in P_{\alpha}\subset \pi ((x\oplus V_{n+1})\cap (x\oplus V))$. By the proof of Theorem \ref{3dl3}, $z\in x\oplus ((V_{n+1}\cap V)\oplus H)$. Since $z\in b_{\alpha ,m}\oplus V_{n+1}$ and $b_{\alpha ,m}\in x\oplus V_{n+1}$, we have
\begin{eqnarray}
z&\in&(x\oplus V_{n+1})\oplus V_{n+1}\nonumber\\
&=&x\oplus (V_{n+1}\oplus gyr[V_{n+1},x](V_{n+1}))\nonumber\\
&=&x\oplus (V_{n+1}\oplus (V_{n+1})).\nonumber
\end{eqnarray}
Then, $(\ominus x)\oplus z\in V_{n+1}\oplus V_{n+1}$. Hence, $(\ominus x)\oplus z\in ((V_{n+1}\cap V)\oplus H)\cap (V_{n+1}\oplus V_{n+1})$. Therefore, there exist $a\in (V_{n+1}\cap V),~h\in H$ and $u_{1},u_{2}\in V_{n+1}$ such that $(\ominus x)\oplus z=a\oplus h=u_{1}\oplus u_{2}$, whence $h=(\ominus a)\oplus (u_{1}\oplus u_{2})\in V_{n+1}\oplus (V_{n+1}\oplus V_{n+1})\subset V_{n}$. It follows that $(\ominus x)\oplus z\in (V_{n+1}\cap V)\oplus (V_{n}\cap H)$. Thus $z\in x\oplus ((V_{n+1}\cap V)\oplus (V_{n}\cap H))\subset x\oplus (V\oplus V)\subset U$.

Since $b_{\alpha ,m}\in x\oplus V_{n+1}$, there is $u_{3}\in V_{n+1}$ such that $b_{\alpha ,m}=x\oplus u_{3}$. Thus,
\begin{eqnarray}
x&=&(x\oplus u_{3})\oplus gyr[x,u_{3}](\ominus u_{3})\nonumber\\
&=&b_{\alpha ,m}\oplus gyr[x,u_{3}](\ominus u_{3})\nonumber\\
&\in&b_{\alpha ,m}\oplus gyr[x,u_{3}](V_{n+1})\nonumber\\
&=&b_{\alpha ,m}\oplus V_{n+1}.\nonumber
\end{eqnarray}
Therefore, there exists $i_{1}\geq i_{0}$ such that $x_{i}\in b_{\alpha ,m}\oplus V_{n+1}$ whenever $i\geq i_{1}$, whence $\{x\}\cup \{x_{i}:i\geq i_{1}\}\subset \pi ^{-1}(P_{\alpha})\cap (b_{\alpha ,m}\oplus V_{n+1})$.

Therefore, we conclude that $G$ has a star-countable $cs$-network.
\end{proof}

\begin{theorem}\label{3dl4}
Let $G$ be a strongly topological gyrogroup with a symmetric neighborhood base $\mathscr U$ at $0$ and let $H$ be a second-countable admissible subgyrogroup generated from $\mathscr U$. If the quotient space $G/H$ has a star-countable $wcs^{*}$-network, $G$ has also a star-countable $wcs^{*}$-network.
\end{theorem}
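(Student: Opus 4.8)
The plan is to repeat the proof of Theorem~\ref{3dl2} almost word for word, replacing the extraction of a \emph{tail} of a convergent sequence (which a $cs$-network supplies) by the extraction of a \emph{subsequence} (which a $wcs^*$-network supplies), and then to watch the one place where the two arguments genuinely part company. First I would fix, exactly as in Theorem~\ref{3dl2}, a sequence $\{U_n:n\in\omega\}\subset\mathscr U$ with $U_{n+1}\oplus(U_{n+1}\oplus U_{n+1})\subset U_n$ and $H=\bigcap_{n\in\omega}U_n$, together with a countable family $\{V_n:n\in\mathbb N\}\subset\mathscr U$ satisfying $V_{n+1}\oplus(V_{n+1}\oplus V_{n+1})\subset V_n$ such that $\{V_n\cap H:n\in\mathbb N\}$ is a local base at $0$ for $H$ (the latter uses that $H$ is second-countable). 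Let $\mathcal P=\{P_\alpha:\alpha\in\Lambda\}$ be a star-countable $wcs^*$-network for $G/H$. For each $\alpha\in\Lambda$ the countable subfamily $\{P_\alpha\cap P_\beta:\beta\in\Lambda\}$ is a $wcs^*$-network for $P_\alpha$, so, as in Theorem~\ref{3dl2}, $P_\alpha$ is cosmic, hence separable; since $H$ is a closed separable $L$-subgyrogroup, Proposition~\ref{t00002} then gives that $\pi^{-1}(P_\alpha)$ is separable, and I would choose a countable dense subset $B_\alpha=\{b_{\alpha,m}:m\in\mathbb N\}$ of it. Put $\mathcal F=\{\pi^{-1}(P_\alpha)\cap(b_{\alpha,m}\oplus V_n):\alpha\in\Lambda,\ m,n\in\mathbb N\}$; this family is star-countable, because if two of its members intersect then the corresponding members of $\mathcal P$ intersect, while for each fixed $\alpha$ there are only countably many pairs $(m,n)$.

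Next I would check that $\mathcal F$ is a $wcs^*$-network for $G$. Let $\{x_i\}_i\to x$ in $G$ and let $U$ be a neighborhood of $x$; choose an open neighborhood $V$ of $0$ with $x\oplus(V\oplus V)\subset U$ and $n\in\mathbb N$ with $V_n\cap H\subset V\cap H$. Since $\pi$ is open and continuous, $\{\pi(x_i)\}_i\to\pi(x)$ and $\pi((x\oplus V_{n+1})\cap(x\oplus V))$ is an open neighborhood of $\pi(x)$, so the $wcs^*$-network $\mathcal P$ yields $\alpha\in\Lambda$ and a subsequence $\{x_{i_j}\}_j$ of $\{x_i\}_i$ with $\{\pi(x_{i_j}):j\in\mathbb N\}\subset P_\alpha\subset\pi((x\oplus V_{n+1})\cap(x\oplus V))$. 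This is the single point at which the argument differs from that of Theorem~\ref{3dl2}: there one had $x\in\pi^{-1}(P_\alpha)$ at no cost, whereas now $\pi(x)$ need not lie in $P_\alpha$. Instead I would exploit that $x_{i_j}\in\pi^{-1}(P_\alpha)$ for every $j$ and $x_{i_j}\to x$: the set $(x\oplus V_{n+1})\cap\pi^{-1}(P_\alpha)$ is then non-empty (it contains $x_{i_j}$ for all large $j$) and relatively open in $\pi^{-1}(P_\alpha)$, so by density of $B_\alpha$ there is $m\in\mathbb N$ with $b_{\alpha,m}\in x\oplus V_{n+1}$.

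From here the reasoning is a transcription of the Subclaims in the proofs of Theorems~\ref{3dl3} and~\ref{3dl2}. One first shows $\pi^{-1}(P_\alpha)\cap(b_{\alpha,m}\oplus V_{n+1})\subset U$: if $z$ lies in this set, then $\pi(z)\in P_\alpha\subset\pi((x\oplus V_{n+1})\cap(x\oplus V))$ gives, via $gyr[x,y](U_m)=U_m$ and $H=\bigcap_{m}U_m$ as in Theorem~\ref{3dl3}, that $(\ominus x)\oplus z\in(V_{n+1}\cap V)\oplus H$, while $z\in b_{\alpha,m}\oplus V_{n+1}\subset(x\oplus V_{n+1})\oplus V_{n+1}=x\oplus(V_{n+1}\oplus V_{n+1})$ (by the gyro-invariance of $V_{n+1}$) gives $(\ominus x)\oplus z\in V_{n+1}\oplus V_{n+1}$; writing $(\ominus x)\oplus z=a\oplus h=u_1\oplus u_2$ with $a\in V_{n+1}\cap V$, $h\in H$, $u_1,u_2\in V_{n+1}$ forces $h=(\ominus a)\oplus(u_1\oplus u_2)\in V_{n+1}\oplus(V_{n+1}\oplus V_{n+1})\subset V_n$, hence $h\in V_n\cap H\subset V$, so $z\in x\oplus((V_{n+1}\cap V)\oplus(V_n\cap H))\subset x\oplus(V\oplus V)\subset U$. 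Finally, from $b_{\alpha,m}=x\oplus u$ with $u\in V_{n+1}$, the right cancellation law of Lemma~\ref{a} together with $gyr[x,u](\ominus u)\in V_{n+1}$ gives $x=b_{\alpha,m}\oplus gyr[x,u](\ominus u)\in b_{\alpha,m}\oplus V_{n+1}$, so $b_{\alpha,m}\oplus V_{n+1}$ is an open neighborhood of $x$ and hence contains $x_{i_j}$ for all $j$ beyond some $j_0$. Then $\{x_{i_j}:j\geq j_0\}$ is a subsequence of $\{x_i\}_i$ contained in the member $\pi^{-1}(P_\alpha)\cap(b_{\alpha,m}\oplus V_{n+1})$ of $\mathcal F$, which in turn is contained in $U$. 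Therefore $\mathcal F$ is a star-countable $wcs^*$-network for $G$.

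I expect the only genuinely new ingredient, hence the main (though mild) obstacle, to be the loss of $\pi(x)\in P_\alpha$; the remedy is to let the subsequence terms $x_{i_j}$, which do lie in $\pi^{-1}(P_\alpha)$ and converge to $x$, play the role that $x$ itself played in the $cs$-network proof, both when locating $b_{\alpha,m}$ close to $x$ and when producing the required subsequence inside the chosen element of $\mathcal F$. Everything else is a verbatim adaptation of the computations already carried out in Theorems~\ref{3dl3} and~\ref{3dl2}.
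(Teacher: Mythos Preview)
Your proposal is correct and follows essentially the same approach as the paper's proof: construct $\mathcal P$, $\mathcal F$ and the local base $\{V_n\}$ exactly as in Theorem~\ref{3dl2}, extract a subsequence via the $wcs^*$-network, and then locate $b_{\alpha,m}$ near $x$ using a subsequence term inside $\pi^{-1}(P_\alpha)$ rather than $x$ itself. The only cosmetic difference is that the paper routes through $V_{n+2}$ (picking $x_{i_1}\in x\oplus V_{n+2}$ and then $b_{\alpha,m}\in x_{i_1}\oplus V_{n+2}$, whence $b_{\alpha,m}\in x\oplus(V_{n+2}\oplus V_{n+2})\subset x\oplus V_{n+1}$), whereas you obtain $b_{\alpha,m}\in x\oplus V_{n+1}$ in one step; your version is a harmless simplification of the same argument.
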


\begin{proof}
Let $\mathscr{U}$ be a symmetric neighborhood base at $0$ such that $gyr[x, y](U)=U$ for any $x, y\in G$ and $U\in\mathscr{U}$. Since the $L$-subgyrogroup $H$ of $G$ is first-countable at the identity element $0$ of $G$, there exists a countable family $\{V_{n}:n\in \mathbb{N}\}$ of open symmetric neighborhoods of $0$ in $G$ such that $(V_{n+1}\oplus (V_{n+1}\oplus V_{n+1}))\subset V_{n}$ for each $n\in \mathbb{N}$ and the family $\{V_{n}\cap H:n\in \mathbb{N}\}$ is a local base at $0$ for $H$.

We construct $\mathcal{P}$ and $\mathcal{F}$ as the same with in Theorem \ref{3dl2}, and we show that $\mathcal{F}$ is a $wcs^{*}$-network for $G$.

Let $\{x_{i}\}_{i}$ be a sequence converging to a point $x$ in $G$ and $U$ be a neighborhood of $x$ in $G$. Choose an open neighborhood $V$ of $0$ in $G$ such that $(x\oplus (V\oplus V))\subset U$. Since $\{V_{n}\cap H:n\in \mathbb{N}\}$ is a local base at $0$ for $H$, there exists $n\in \mathbb{N}$ such that $V_{n}\cap H\subset V\cap H$. Since $\mathcal{P}$ is a $wcs^{*}$-network for $G/H$, there exists a subsequence $\{\pi (x_{i_{j}})\}_{j}$ of the sequence $\{\pi (x_{i})\}_{i}$ such that $\{\pi (x_{i_{j}}):j\in \mathbb{N}\}\subset P_{\alpha}\subset \pi ((x\oplus V_{n+1})\cap (x\oplus V))$ for some $\alpha \in \Lambda$. As the sequence $\{x_{i}\}_{i}$ converges to $x$, we have some $x_{i_{j}}\in x\oplus V_{n+2}$ for each $j\in \mathbb{N}$. Furthermore, since $x_{i_{1}}\in \pi ^{-1}(P_{\alpha})$, $(x_{i_{1}}\oplus V_{n+2})\cap \pi^{-1}(P_{\alpha})$ is non-empty and open in $\pi^{-1}(P_{\alpha})$. Then it follows from $\overline{B_{\alpha}}=\pi^{-1}(P_{\alpha})$ that there exists $m\in \mathbb{N}$ such that $b_{\alpha ,m}\in x_{i_{1}}\oplus V_{n+2}$. Then
\begin{eqnarray}
b_{\alpha ,m}&\in&x_{i_{1}}\oplus V_{n+2}\nonumber\\
&\subset&(x\oplus V_{n+2})\oplus V_{n+2}\nonumber\\
&=&x\oplus (V_{n+2}\oplus gyr[V_{n+2},x](V_{n+2}))\nonumber\\
&=&x\oplus (V_{n+2}\oplus V_{n+2}).\nonumber
\end{eqnarray}

Moreover, it is proved in Theorem \ref{3dl2} that $\pi^{-1}(P_{\alpha})\cap (b_{\alpha ,m}\oplus V_{n+1})\subset U$.

In conclusion, $G$ has a star-countable $wcs^{*}$-network.
\end{proof}

\begin{lemma}\cite{LF1}\label{t00005}
The following are equivalent for a rectifiable space.

(i) Every compact (countably compact) subset is first-countable.

\smallskip
(ii) Every compact (countably compact) subset is metrizable.
\end{lemma}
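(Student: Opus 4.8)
The plan is to reduce metrizability of a (countably) compact subset to the $G_\delta$-diagonal property, exploiting the rectifying homeomorphism that comes with the rectifiable structure. Recall that a rectifiable space carries a homeomorphism $\phi\colon X\times X\to X\times X$ with $\pi_1\circ\phi=\pi_1$ and $\phi(x,x)=(x,e)$ for a fixed point $e\in X$; writing $d(x,y)=\pi_2(\phi(x,y))$ one has $\phi(x,y)=(x,d(x,y))$ and $d(x,x)=e$, and since $\phi$ is injective, $\phi(x,y)=(x,e)$ forces $y=x$. The implication (ii)$\Rightarrow$(i) is trivial since metrizable spaces are first-countable, so the content is (i)$\Rightarrow$(ii). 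Given a (countably) compact subset $K\subseteq X$, I would show that the diagonal $\Delta_K=\{(x,x):x\in K\}$ is a $G_\delta$-set in $K\times K$, and then invoke the classical metrization theorems: a compact Hausdorff space with a $G_\delta$-diagonal is metrizable (\v{S}ne\u{\i}der), and a countably compact space with a $G_\delta$-diagonal is compact, hence metrizable (Chaber).

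For the core step I would put $C:=\phi(K\times K)$ and $K':=\pi_2(C)=\{d(x,y):x,y\in K\}$. From $d(x,x)=e$ we get $e\in K'$, and the injectivity remark above gives, by a short set-theoretic computation, $(X\times\{e\})\cap C=K\times\{e\}$ and $(\phi|_{K\times K})^{-1}(K\times\{e\})=\Delta_K$. In the compact case $K\times K$ is compact, so $C$ and $K'$ are compact; hypothesis (i) then makes $K'$ first-countable, and since $X$ (hence $K'$) is Hausdorff, the point $e$ is a $G_\delta$ in $K'$, say $\{e\}=\bigcap_{n}W_{n}$ with each $W_{n}$ open in $K'$. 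Pulling back through $\pi_2$, the set $K\times\{e\}=\bigcap_{n}(\pi_2^{-1}(W_{n})\cap C)$ is a $G_\delta$ in $C$; as $\phi|_{K\times K}\colon K\times K\to C$ is a homeomorphism, $\Delta_K=(\phi|_{K\times K})^{-1}(K\times\{e\})$ is a $G_\delta$ in $K\times K$, and \v{S}ne\u{\i}der's theorem finishes the compact case.

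For the countably compact case the one extra device is to keep the product countably compact. By (i), $K$ is first-countable, and a first-countable countably compact space is sequentially compact; a product of two sequentially compact spaces is sequentially compact, so $K\times K$ is sequentially compact and therefore countably compact, whence the continuous images $C$ and $K'=\pi_2(C)$ are countably compact as well. Now (i) again yields that $K'$ is first-countable, and the previous paragraph applies verbatim to produce a $G_\delta$-diagonal for $K$; Chaber's theorem then gives that $K$ is compact and metrizable.

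The step I expect to be the real obstacle is obtaining the $G_\delta$-point (equivalently, a first-countability statement) for the auxiliary set $K'$: it sits inside $X$ but is neither a subgyrogroup nor a translate of $K$, so one cannot apply the hypothesis to it directly — the point is that $K'$ is a continuous image of $K\times K$ under (a restriction of) the homeomorphism $\phi$ and hence inherits (countable) compactness from $K$, the sequential-compactness detour being exactly what secures this in the countably compact setting. The other delicate bookkeeping is the identity $(X\times\{e\})\cap C=K\times\{e\}$, where the injectivity of $\phi$ is what prevents the relevant fibre from being strictly larger than $K\times\{e\}$.
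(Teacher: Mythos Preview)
The paper does not supply its own proof of this lemma; it is simply quoted from \cite{LF1} and used as a black box in Theorem~\ref{3dl5}. So there is no in-paper argument to compare yours against.

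Your proof is correct and is essentially the standard one for this type of result (and almost certainly the one in \cite{LF1}): exploit the rectifying homeomorphism $\phi$ to identify the diagonal $\Delta_K$ with the $\phi$-preimage of the slice $K\times\{e\}$ inside $C=\phi(K\times K)$, show that the auxiliary set $K'=\pi_2(C)$ inherits (countable) compactness, apply hypothesis (i) to $K'$ to make $e$ a $G_\delta$-point there, and pull this back to a $G_\delta$-diagonal on $K$, finishing with \v{S}ne\u{\i}der (compact case) or Chaber (countably compact case). The injectivity check giving $(X\times\{e\})\cap C=K\times\{e\}$ is handled correctly.

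Your sequential-compactness detour in the countably compact case is exactly the right device, and it is worth stressing why: the product of two countably compact spaces need not be countably compact, so one cannot argue directly that $K\times K$ is countably compact; passing through sequential compactness (first-countable $+$ countably compact $\Rightarrow$ sequentially compact, and finite products of sequentially compact spaces are sequentially compact) repairs this. One small point you might make explicit is that Hausdorff rectifiable spaces are regular (indeed Tychonoff), so the regularity hypothesis in Chaber's theorem is available.
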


\begin{theorem}\label{3dl5}
Let $H$ be an $L$-subgyrogroup of a topological gyrogroup $G$, and suppose that all compact subspaces of $H$ and $G/H$ are metrizable. Then all compact subspaces of $G$ are metrizable as well.
\end{theorem}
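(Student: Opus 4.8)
The plan is to reduce the statement to a statement about first-countability of compact subsets and then invoke Lemma~\ref{t00005}, exploiting that a topological gyrogroup is a rectifiable space (this is exactly the result of Cai, Lin and He recalled in the introduction). By Lemma~\ref{t00005}, it suffices to show that every compact subspace $K$ of $G$ is first-countable. So fix a compact subset $K\subseteq G$ and a point $x\in K$; I will produce a countable base at $x$ in $K$.

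First I would push $K$ down to the quotient. The restriction $\pi|_K\colon K\to \pi(K)\subseteq G/H$ is a continuous map of the compact set $K$ onto $\pi(K)$, so $\pi(K)$ is a compact subspace of $G/H$, hence metrizable by hypothesis, hence first-countable; let $\{W_m:m\in\omega\}$ be a countable base at $\pi(x)$ in $\pi(K)$. On the fiber side, each fiber $\pi^{-1}(\pi(a))=a\oplus H$ is homeomorphic to $H$, and $K\cap(a\oplus H)$ is a compact subset of this copy of $H$, hence metrizable by hypothesis, hence first-countable. In particular the fiber $K\cap(x\oplus H)$ through $x$ has a countable base $\{B_j:j\in\omega\}$ at $x$ relative to $K\cap(x\oplus H)$; I may take each $B_j$ of the form $K\cap(x\oplus H)\cap O_j$ for $O_j$ open in $G$.

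The remaining step is to glue a neighborhood base at $\pi(x)$ in $\pi(K)$ together with a neighborhood base at $x$ in the fiber $K\cap(x\oplus H)$ into a neighborhood base at $x$ in $K$. Concretely, I would show that the countable family
\[
\mathcal{B}=\bigl\{\,K\cap\pi^{-1}(W_m)\cap(x\oplus V_n)\,:\,m\in\omega,\ V_n\in\mathscr{U}\ \text{or}\ V_n\ \text{from a countable base at }0\text{ on }H\,\bigr\}
\]
is a base at $x$ in $K$ (here one uses a countable outer base $\{V_n\}$ near $0$ that traces a base on $H$, analogous to the families built in Theorems~\ref{3dl3} and \ref{3dl2}). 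Given an open $U\ni x$ in $G$, pick a symmetric $V\in\mathscr U$ with $x\oplus(V\oplus V)\subseteq U$ and $V_n\cap H\subseteq V\cap H$ for suitable $n$; then for any $z\in K\cap\pi^{-1}(W_m)\cap(x\oplus V_{n+1})$ with $W_m$ small enough that $\pi(K)\cap W_m\subseteq \pi((x\oplus V_{n+1})\cap(x\oplus V)\cap K)$, the same left-cancellation and gyration-invariance computation as in Subclaim~2 of Theorem~\ref{3dl3}---writing $(\ominus x)\oplus z\in((V_{n+1}\cap V)\oplus H)\cap(V_{n+1}\oplus V_{n+1})$ and absorbing $h$ into $V_n\cap H\subseteq V\cap H$---yields $z\in x\oplus(V\oplus V)\subseteq U$. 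Since $\pi(x)$ has a countable base in the metrizable space $\pi(K)$ and the fiber has a countable base at $x$, countably many such sets suffice, so $\mathcal{B}$ is a countable base at $x$ in $K$.

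The main obstacle is the gluing step: first-countability of the base $\pi(K)$ and of each fiber does not by itself give first-countability of $K$ for an arbitrary quotient map, so one genuinely needs the strong-gyrogroup machinery (gyration-invariant neighborhoods of $0$, the admissible-type chain $V_{n+1}\oplus(V_{n+1}\oplus V_{n+1})\subseteq V_n$, and the left/right cancellation laws of Lemma~\ref{a}) to control how a neighborhood in $G$ decomposes along $\pi$. Once this decomposition is in place, the verification is essentially the computation already carried out in Theorem~\ref{3dl3}, now intersected with the compact set $K$; a mild point to be careful about is that one should choose the countable base $\{V_n\}$ on $H$ and the base $\{W_m\}$ on $\pi(K)$ \emph{before} fixing $x$ is not possible since the fiber base depends on $x$, but homogeneity of the rectifiable space $G$ lets one transport everything to a fixed basepoint, so it is enough to treat $x=0$.
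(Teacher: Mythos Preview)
Your overall strategy --- reduce to first-countability of compact subsets via Lemma~\ref{t00005}, then analyze a compact $K$ through $\pi(K)$ and the fibers $K\cap(x\oplus H)$ --- matches the paper's. But the gluing step is where you go wrong, in two ways.

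First, the hypotheses you invoke are not available. Theorem~\ref{3dl5} assumes only that $G$ is a \emph{topological} gyrogroup and $H$ an $L$-subgyrogroup; there is no symmetric base $\mathscr U$ with $\mathrm{gyr}[x,y](U)=U$, no admissible chain $V_{n+1}\oplus(V_{n+1}\oplus V_{n+1})\subset V_n$, and $H$ is not assumed first-countable (only its compact subsets are metrizable). So the Subclaim~2 computation from Theorem~\ref{3dl3}, which depends essentially on gyration-invariance of the $V_n$ and on $\{V_n\cap H\}$ being a base at $0$ in $H$, cannot be carried out here. Your remark that ``one genuinely needs the strong-gyrogroup machinery'' is therefore not just an aesthetic choice; it would make the proof invalid under the stated hypotheses.

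Second, and more importantly, you miss the observation that makes the gluing step trivial: since $K$ is compact and $G/H$ is Hausdorff, the restriction $f=\pi|_K:K\to\pi(K)$ is automatically a \emph{closed} map (indeed a perfect map, as the fibers $K\cap(x\oplus H)$ are compact). For a closed continuous surjection with first-countable image and first-countable compact fibers, the domain is first-countable; this is a pure general-topology fact (the paper cites \cite[Lemma~3.3.23]{AA}) and requires no gyrogroup structure at all. The paper's proof is exactly this: push $K$ to $\pi(K)$, note $f$ is closed because $K$ is compact, observe the image and fibers are metrizable by hypothesis, apply the cited lemma to get first-countability of $K$, and finish with Lemma~\ref{t00005}. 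Your claim that first-countability of base and fibers ``does not by itself give first-countability of $K$'' is true for arbitrary quotient maps but false for closed maps with compact fibers, which is precisely the situation here.
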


\begin{proof}
Let $\pi$ be the natural homomorphism from $G$ onto its quotient space $G/H$ of left cosets. For an arbitrary $y\in G/H$, there exists a point $x\in G$ such that $\pi (x)=y$. Then $\pi ^{-1}(y)=x\oplus H$ which is homeomorphic to $H$.

Fix a compact subset $X$ of $G$, let $f$ be the restriction of $\pi$ to $X$. The compact subspace $Y=f(X)$ of the space $G/H$ is metrizable. Indeed, all compact subsets of the fibers of $f$ are metrizable. Since $X$ is compact and $f:X\rightarrow Y$ is continuous, it is clear that $f$ is closed mapping. By \cite[Lemma 3.3.23]{AA}, all compact subsets of $G$ are first-countable. Finally, it follows from Lemma \ref{t00005} that $X$ is metrizable.
\end{proof}

\begin{lemma}\cite[Lemma 2.1.6]{linbook1}\label{t00006}
Let $\mathcal{P}$ be a point-countable family of subsets of a space $X$. Then $\mathcal{P}$ is a $k$-network for $X$ if and only if it is a $wcs^{*}$-network for $X$ and each compact subset of $X$ is first-countable (or sequential).
\end{lemma}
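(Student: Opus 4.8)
The plan is to treat the two implications separately, the common theme being that the gap between the $k$-network condition and the $wcs^{*}$-condition is exactly the behaviour of $\mathcal{P}$ on compact sets versus on convergent sequences. Two elementary observations will be used on both sides. First, a $wcs^{*}$-network is automatically a network, since applying the $wcs^{*}$-condition to a constant sequence $x_{n}=x$ produces, for each neighbourhood $U$ of $x$, a member $P\in\mathcal{P}$ with $x\in P\subset U$. Second, any $k$-network is a $wcs^{*}$-network: given $\{x_{n}\}\to x$ and a neighbourhood $U$ of $x$, the set $K=\{x\}\cup\{x_{n}:n\in\mathbb{N}\}$ is compact and contained in $U$, so some finite $\mathcal{P}'\subset\mathcal{P}$ has $K\subset\bigcup\mathcal{P}'\subset U$, and one of the finitely many members of $\mathcal{P}'$ catches infinitely many of the $x_{n}$. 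I will also use that a compact sequential (in particular, compact first-countable) space is sequentially compact, so the hypothesis ``every compact subset is first-countable (or sequential)'' upgrades to ``every compact subset is sequentially compact''.

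For the direction $(\Rightarrow)$, suppose $\mathcal{P}$ is a point-countable $k$-network. The $wcs^{*}$-network property is the second observation above. For the assertion about compact subsets I would restrict: if $K\subset X$ is compact, then $\{P\cap K:P\in\mathcal{P}\}$ is a point-countable $k$-network for the subspace $K$ (intersect with $K$ a finite subfamily witnessing $C\subset\bigcup\mathcal{P}'\subset U$), and I would then invoke the known theorem that a compact Hausdorff space carrying a point-countable $k$-network is metrizable; metrizable spaces are first-countable and sequential, as required. Thus the nontrivial content of this direction is pushed entirely into that metrization theorem for compacta.

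For the direction $(\Leftarrow)$, suppose $\mathcal{P}$ is a point-countable $wcs^{*}$-network and every compact subset of $X$ is sequential, hence sequentially compact. Given compact $K$ and open $U\supset K$, I would first pass to $\mathcal{Q}=\{P\in\mathcal{P}:P\subset U\}$, which by the first observation covers $K$; it then suffices to extract a finite subcover of $K$ from $\mathcal{Q}$. Assuming no finite subfamily of $\mathcal{Q}$ covers $K$, the plan is to build recursively points $x_{n}\in K$ and sets $Q_{n}\in\mathcal{Q}$ with $x_{n}\in Q_{n}$ and with $x_{n}$ lying outside every member of $\mathcal{Q}$ meeting $\{x_{0},\dots,x_{n-1}\}$ — a legitimate demand while that subfamily of $\mathcal{Q}$, which is countable by point-countability, still fails to cover $K$. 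Passing to a convergent subsequence $x_{n_{k}}\to x\in K\subset U$ by sequential compactness and applying the $wcs^{*}$-condition to it with a neighbourhood shrinking into $U$, one obtains $P\in\mathcal{Q}$ containing infinitely many $x_{n_{k}}$; but then $P$ meets some initial segment $\{x_{0},\dots,x_{m-1}\}$ while also containing a point $x_{m'}$ with $m'\geq m$, contradicting the construction.

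The hard part will be the degenerate case of that recursion: it may happen that at some finite stage $n$ the countable family of members of $\mathcal{Q}$ meeting $\{x_{0},\dots,x_{n-1}\}$ already covers $K$, so no further $x_{n}$ can be chosen. In that event one is left with a \emph{countable} cover of the compact space $K$ by members of $\mathcal{Q}$ and must still produce a finite subcover — which is false for arbitrary countable covers of compacta and is precisely where sequential compactness together with the $wcs^{*}$-condition has to be invoked once more, via the same sort of ``escaping sequence'' argument. Carrying out the recursion with enough bookkeeping that the forbidden subfamilies eventually exhaust the relevant part of $\mathcal{Q}$, or else splitting cleanly into the ``runs forever'' and ``terminates'' cases, is the delicate point; everything else is routine manipulation of the gyrogroup-free definitions. (The full argument is carried out in \cite{linbook1}.)
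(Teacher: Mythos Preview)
The paper does not prove this lemma; it is quoted verbatim from \cite[Lemma~2.1.6]{linbook1} and used as a black box, so there is no in-paper argument to compare against. Your forward direction is sound: the $wcs^{*}$-condition is immediate, and the metrizability of compact subsets in the presence of a point-countable $k$-network is a standard result (so first-countability and sequentiality of compacta follow).

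For the backward direction your instinct is right but the recursion as written can stall, as you note. The clean fix is the diagonalization you hint at under ``enough bookkeeping'': for each $x\in K$ enumerate $\{P\in\mathcal{P}:x\in P\subset U\}$ as $\{P_{i}(x):i\in\mathbb{N}\}$, and at stage $n$ pick
\[
x_{n}\in K\setminus\bigcup\{P_{i}(x_{m}):m<n,\ i\le n\}.
\]
The family being avoided here is \emph{finite}, so the choice is always possible by the standing assumption that no finite subfamily of $\mathcal{Q}$ covers $K$; the recursion therefore never terminates, and your convergent-subsequence contradiction then goes through verbatim (the $wcs^{*}$-member $P$ containing infinitely many $x_{n_{k}}$ equals some $P_{j}(x_{n_{k_{0}}})$, forcing $x_{n_{l}}\notin P$ once $n_{l}>\max\{j,n_{k_{0}}\}$). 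By contrast, the alternative you float --- handling a ``terminates'' case separately --- does not close: you would be trying to extract a finite subcover of $K$ from a countable family of not-necessarily-open sets, and the member of $\mathcal{P}$ produced by the $wcs^{*}$-condition from a convergent subsequence of the escaping points need not belong to that countable family, so no contradiction arises along that branch.
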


\begin{theorem}
Let $G$ be a strongly topological gyrogroup with a symmetric neighborhood base $\mathscr U$ at $0$ and let $H$ be a second-countable admissible subgyrogroup generated from $\mathscr U$. If the quotient space $G/H$ has a star-countable $k$-network, $G$ has also a star-countable $k$-network.
\end{theorem}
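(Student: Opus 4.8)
The plan is to combine the two main tools already assembled in this section: Theorem~\ref{3dl4}, which transfers a star-countable $wcs^{*}$-network from $G/H$ to $G$, and Theorem~\ref{3dl5}, which transfers metrizability of compact subspaces from $H$ and $G/H$ to $G$; the bridge between $wcs^{*}$-networks and $k$-networks is Lemma~\ref{t00006}. First I would observe that a star-countable family is automatically point-countable, so Lemma~\ref{t00006} applies to the networks in question. Since $G/H$ has a star-countable $k$-network, it has in particular a point-countable $k$-network, hence by Lemma~\ref{t00006} every compact subset of $G/H$ is first-countable, and therefore (by the second paragraph of Lemma~\ref{t00005}, $G/H$ being a rectifiable space as a coset space of a topological gyrogroup --- or more directly since $G/H$ is first-countable on compacta) every compact subset of $G/H$ is metrizable.

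Next I would handle $H$: by hypothesis $H$ is second-countable, so $H$ is metrizable, and hence every compact (indeed every) subspace of $H$ is metrizable. Applying Theorem~\ref{3dl5} to the $L$-subgyrogroup $H$ of $G$, I conclude that every compact subspace of $G$ is metrizable, and in particular every compact subspace of $G$ is first-countable. On the other hand, every $k$-network is a $wcs^{*}$-network, so $G/H$ has a star-countable $wcs^{*}$-network; by Theorem~\ref{3dl4}, $G$ then has a star-countable $wcs^{*}$-network, say $\mathcal F$. This $\mathcal F$ is point-countable (being star-countable) and is a $wcs^{*}$-network for $G$, and we have just shown every compact subset of $G$ is first-countable. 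Therefore Lemma~\ref{t00006} applies in the reverse direction to $\mathcal F$ and $G$: $\mathcal F$ is a $k$-network for $G$. Since $\mathcal F$ is star-countable, $G$ has a star-countable $k$-network, which is the desired conclusion.

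The only point requiring a little care --- and the step I expect to be the main obstacle, though a mild one --- is the verification that compact subsets of $G/H$ are metrizable from the $k$-network hypothesis: one must invoke that $G/H$ is a coset space of a topological gyrogroup and hence, by the results cited in the introduction (every topological gyrogroup is a rectifiable space, and coset spaces inherit enough of this structure), is itself rectifiable, so that Lemma~\ref{t00005} is genuinely applicable; alternatively one bypasses rectifiability entirely, noting that "first-countable compact subset" already upgrades to "metrizable compact subset" for any space via the standard fact that a first-countable compact Hausdorff space is metrizable, which is all that is needed to feed Theorem~\ref{3dl5}. Either way, once metrizability of compacta in $H$ and $G/H$ is in hand, the argument is a direct chaining of Theorems~\ref{3dl4} and \ref{3dl5} through Lemma~\ref{t00006}, with no new computations inside $G$ required.
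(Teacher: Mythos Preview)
Your overall architecture matches the paper's: use Lemma~\ref{t00006} on $G/H$ to get first-countability of its compacta, transfer the $wcs^{*}$-network to $G$ via Theorem~\ref{3dl4}, transfer first-countability of compacta to $G$, and then apply Lemma~\ref{t00006} again. The paper does exactly this.

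There is, however, a genuine error in the step where you upgrade first-countability of compacta in $G/H$ to metrizability. The claim that ``a first-countable compact Hausdorff space is metrizable'' is false: the split interval (double arrow space) is compact, Hausdorff, first-countable, separable, and not metrizable. Your other route, invoking Lemma~\ref{t00005} by asserting that $G/H$ is rectifiable, is also unjustified: the paper establishes that topological gyrogroups are rectifiable, but nowhere is it shown (and it is not generally true) that coset spaces $G/H$ inherit rectifiability. So neither of your two proposed justifications for ``compacta in $G/H$ are metrizable'' works, and since Theorem~\ref{3dl5} \emph{as stated} requires metrizable compacta in $G/H$, you cannot invoke it directly.

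The paper avoids this trap by citing \cite[Lemma 3.3.23]{AA} together with (the proof method of) Theorem~\ref{3dl5}: that lemma from \cite{AA} needs only first-countability of the image and of the fibers to conclude first-countability of the domain under a closed map. Since the fibers are homeomorphic to $H$ (second-countable, hence first-countable) and compacta in $G/H$ are first-countable by Lemma~\ref{t00006}, one gets first-countability of compacta in $G$ directly, without ever needing metrizability of compacta in $G/H$. Once you replace your appeal to Theorem~\ref{3dl5} by this more primitive lemma, your argument coincides with the paper's.
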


\begin{proof}
Since $G/H$ has a star-countable $k$-network, it follows from Theorem \ref{3dl4} that $G$ has a star-countable $wcs^{*}$-network. By Lemma \ref{t00006}, each compact subset of $G/H$ is first-countable. Then every compact subset of $G$ is first-countable by \cite[Lemma 3.3.23]{AA} and Theorem \ref{3dl5}. Therefore, $G$ has a star-countable $k$-network by Lemma \ref{t00006}.
\end{proof}

\section{Quotient with respect to locally compact admissible $L$-subgyrogroups}

In this section, we research the quotient space $G/H$ with some generalized metric properties when $G$ is a strongly topological gyrogroup with a symmetric neighborhood base $\mathscr U$ at $0$ and $H$ is a locally compact metrizable admissible subgyrogroup generated from $\mathscr U$. Suppose that $G$ is a strongly topological gyrogroup with a symmetric neighborhood base $\mathscr U$ at $0$ and $H$ is a locally compact metrizable admissible subgyrogroup generated from $\mathscr U$. We show that if the quotient space $G/H$ is sequential, then $G$ is also sequential; If the quotient space $G/H$ is strictly (strongly) Fr\'echet-Urysohn, then $G$ is also strictly (strongly) Fr\'echet-Urysohn; Finally, if the quotient space $G/H$ is a stratifiable space (semi-stratifiable space, $\sigma$-space, $k$-semistratifiable space), then $G$ is a local stratifiable space (semi-stratifiable space, $\sigma$-space, $k$-semistratifiable space).

First, recall some concepts about convergence and the relations among them.

\begin{definition}\cite{FS}
Let $X$ be a topological space. A subset $A$ of $X$ is called {\it sequentially closed} if no sequence of points of $A$ converges to a point not in $A$. A subset $A$ of $X$ is called {\it sequentially open} if $X\setminus A$ is sequentially closed. $X$ is called {\it sequential} if each sequentially closed subset of $X$ is closed.
\end{definition}

\begin{definition}\cite{FS}
Let $X$ be a topological space. A space is called {\it Fr\'echet-Urysohn at a point $x\in X$} if for every $A\subset X$ with $x\in \overline{A}\subset X$ there is a sequence $\{x_{n}\}_{n}$ in $A$ such that $\{x_{n}\}_{n}$ converges to $x$ in $X$. A space is called {\it Fr\'echet-Urysohn} if it is Fr\'echet-Urysohn at every point $x\in X$.
\end{definition}

\begin{definition}\cite{GJ}(\cite{SF})
Let $X$ be a topological space. A space is called {\it strictly (strongly) Fr\'echet-Urysohn at a point $x\in X$} if whenever $\{A_{n}\}_{n}$ is a sequence (decreasing sequence) of subsets in $X$ and $x\in \bigcap _{n\in \mathbb{N}}\overline{A_{n}}$, there exists $x_{n}\in A_{n}$ for each $n\in \mathbb{N}$ such that the sequence $\{x_{n}\}_{n}$ converges to $x$. A space $X$ is called {\it strictly (strongly) Fr\'echet-Urysohn} if it is strictly (strongly) Fr\'echet-Urysohn at every point $x\in X$.
\end{definition}

It is well-known \cite{ME} that

(1) every first-countable space is a strictly Fr\'echet-Urysohn space;

\smallskip
(2) every strictly Fr\'echet-Urysohn space is a strongly Fr\'echet-Urysohn space;

\smallskip
(3) every strongly Fr\'echet-Urysohn space is a Fr\'echet-Urysohn space;

\smallskip
(4) every Fr\'echet-Urysohn space is a sequential space.

\begin{lemma}\cite{BL2}\label{4yl1}
Suppose that $(G, \tau, \oplus)$ is a strongly topological gyrogroup with a symmetric neighborhood base $\mathscr U$ at $0$, and suppose that $H$ is a locally compact admissible subgyrogroup generated from $\mathscr U$. Then there exists an open neighborhood $U$ of the identity element $0$ such that $\pi (\overline{U})$ is closed in $G/H$ and the restriction of $\pi $ to $\overline{U}$ is a perfect mapping from $\overline{U}$ onto the subspace $\pi (\overline{U})$, where $\pi: G\rightarrow G/H$ is the natural quotient mapping from $G$ onto the quotient space $G/H$.
\end{lemma}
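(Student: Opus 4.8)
The plan is to reproduce, in the gyrogroup setting, the classical fact that a closed locally compact subgroup of a topological group yields a locally perfect quotient map, the essential extra care being that $G$ itself is not assumed locally compact, so compactness must be borrowed from $H$. First, since $H$ is locally compact and closed in $G$, closures in $H$ coincide with closures in $G$, and using the regularity of the topological gyrogroup $G$ one finds an open neighbourhood $V_{0}$ of $0$ with $K:=\overline{V_{0}}\cap H$ compact. Then, using the joint continuity of $\oplus$ at $(0,0,0)$, the regularity of $G$, and that $\ominus$ is a homeomorphism, I would choose an open \emph{symmetric} neighbourhood $U$ of $0$ so small that $\overline{U}\oplus(\overline{U}\oplus\overline{U})\subseteq V_{0}$ and, after a further shrinking if needed, so that all the finitely many $\oplus$-products of members of $\overline{U}$ used below remain inside $\overline{V_{0}}$; in particular $\overline{U}$ is symmetric and $\overline{U}\oplus\overline{U}\subseteq\overline{V_{0}}$. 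Observe that $U$ is a genuinely small neighbourhood and, unlike the members of $\mathscr U$, need not contain $H$; the hypothesis ``$H$ admissible generated from $\mathscr U$'' will be used only to know that $H$ is a closed $L$-subgyrogroup and that $gyr[x,y](H)=H$ for all $x,y\in G$, exactly as in the proof of Theorem~\ref{3dl3}.

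Next I would verify that $\pi|_{\overline{U}}:\overline{U}\to\pi(\overline{U})$ is perfect. Fix $x\in\overline{U}$; the map $h\mapsto x\oplus h$ is a homeomorphism of $H$ onto $x\oplus H=\pi^{-1}(\pi(x))$ carrying the fibre $\overline{U}\cap(x\oplus H)$ onto $S_{x}:=\{h\in H:x\oplus h\in\overline{U}\}$, and by the left cancellation law (Lemma~\ref{a}(1)) every $h\in S_{x}$ satisfies $h=\ominus x\oplus(x\oplus h)\in\overline{U}\oplus\overline{U}\subseteq\overline{V_{0}}$, so $S_{x}\subseteq K$; being closed in $H$ and contained in the compact $K$, $S_{x}$ and hence the fibre are compact. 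For closedness I would use the criterion that a continuous map $f$ is closed iff every open set $W$ containing a fibre $f^{-1}(\eta)$ contains $f^{-1}(O)$ for some open $O\ni\eta$: given $\eta=\pi(x)\in\pi(\overline{U})$ ($x\in\overline{U}$) and open $W\subseteq\overline{U}$ with $\overline{U}\cap(x\oplus H)\subseteq W$, a failure yields a net $z_{j}\in\overline{U}\setminus W$ with $\pi(z_{j})\to\pi(x)$; refining and writing $z_{j}=(x\oplus n_{j})\oplus h_{j}$ with $n_{j}\to 0$, $h_{j}\in H$, Lemma~\ref{a}(5) and (1) give $\ominus x\oplus z_{j}=n_{j}\oplus gyr[n_{j},x](h_{j})$, which lies in $\overline{U}\oplus\overline{U}\subseteq\overline{V_{0}}$ because $\ominus x,z_{j}\in\overline{U}$; hence, for $j$ large, $gyr[n_{j},x](h_{j})=\ominus n_{j}\oplus(n_{j}\oplus gyr[n_{j},x](h_{j}))$ lies in $\overline{V_{0}}\cap H=K$. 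Passing to a subnet along which $gyr[n_{j},x](h_{j})\to k^{*}\in K\subseteq H$ (compactness of $K$) we get $\ominus x\oplus z_{j}\to 0\oplus k^{*}=k^{*}$ and therefore $z_{j}\to x\oplus k^{*}\in\overline{U}$; thus $k^{*}\in S_{x}$ and $x\oplus k^{*}\in\overline{U}\cap(x\oplus H)\subseteq W$, contradicting $z_{j}\in\overline{U}\setminus W$ (which is closed in $\overline{U}$). So $\pi|_{\overline{U}}$ is a closed continuous surjection onto $\pi(\overline{U})$ with compact fibres, i.e.\ perfect.

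It remains to prove $\pi(\overline{U})$ is closed in $G/H$, and this is the step I expect to be the main obstacle. Since $\pi$ is open and continuous, Lemma~\ref{t00001} gives $\pi^{-1}(\overline{\pi(\overline{U})})=\overline{\pi^{-1}(\pi(\overline{U}))}=\overline{\overline{U}\oplus H}$, so it is equivalent to show $\overline{U}\oplus H$ is closed in $G$. For $g\in\overline{\overline{U}\oplus H}$, translating by $\ominus g$ and rewriting each $\ominus g\oplus(u\oplus H)=(\ominus g\oplus u)\oplus gyr[\ominus g,u](H)=(\ominus g\oplus u)\oplus H$ reduces the statement to: the coset $g\oplus H$ meets $\overline{U}$, i.e.\ the compact fibre $\overline{U}\cap(g\oplus H)$ is nonempty. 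The genuine difficulty is the possible non-local-compactness of $G$: one cannot extract convergent subnets directly from $\overline{U}$, so every limit has to be channelled through the compact sets $K$ and $\overline{U}\cap(g\oplus H)$ furnished by the local compactness of $H$, using the buffer $\overline{U}\oplus(\overline{U}\oplus\overline{U})\subseteq V_{0}$, repeated cancellation as in the previous paragraph, and the regular-closedness of $\overline{U}$ (which keeps $\overline{U}$ away from $H$ outside a neighbourhood of the compact $\overline{U}\cap H$). Making this ``trapping'' of the $H$-components rigorous for an arbitrary $g$ is, I expect, the only delicate point; granting it, $\pi|_{\overline{U}}$ is a perfect mapping of $\overline{U}$ onto the closed subspace $\pi(\overline{U})$ of $G/H$, which is the assertion of the lemma.
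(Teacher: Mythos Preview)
This lemma is quoted from \cite{BL2} and carries no proof in the present paper, so there is no in-paper argument to compare against directly. Your outline follows the standard strategy for the topological-group prototype (cf.\ \cite[Theorem~3.2.2]{AA}), and the two parts you do carry out are correct: the fibre $\overline{U}\cap(x\oplus H)$ is identified with $S_x\subseteq K$ via left cancellation, and the net argument for closedness of $\pi|_{\overline U}$ onto its image is sound, including the gyro-bookkeeping $(\ominus x)\oplus z_j=n_j\oplus\mbox{gyr}[n_j,x](h_j)$ and the use of $\mbox{gyr}[x,y](H)=H$ (valid because $H=\bigcap_n U_n$ with each $U_n\in\mathscr U$).

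The step you yourself flag, however, is a genuine gap rather than a routine detail. Your closedness argument establishes only that $\pi|_{\overline U}$ is closed as a map onto $\pi(\overline U)$; it does not show that $\pi(\overline U)$ is closed in $G/H$, and the two are independent. Your final paragraph reduces this correctly to the closedness of $\overline{U}\oplus H$ in $G$, but then offers only a heuristic (``trapping the $H$-components'') in place of a proof. The difficulty is real: for an arbitrary $g\in\overline{\overline{U}\oplus H}$ you cannot run your net extraction directly, because you have no a priori element of $\overline{U}$ in the coset $g\oplus H$ to anchor the argument. The standard remedy (in the group case, and analogously in \cite{BL2}) is to first use one term of the approximating net to produce a fixed coset representative $g'\in g\oplus H$ lying in a controlled set such as $\overline{U}\oplus\overline{U}$, and then show that the $H$-components of the \emph{entire} net, measured relative to $g'$, fall into a translate of the compact $K$; compactness then gives a subnet whose limit lands in $(g\oplus H)\cap\overline{U}$. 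Until that anchoring step is written out with the requisite gyro-associative corrections, the proof is incomplete; the details are exactly what \cite{BL2} supplies.
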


\begin{theorem}\label{4dl5}
Suppose that $G$ is a strongly topological gyrogroup with a symmetric neighborhood base $\mathscr U$ at $0$. Suppose further that $H$ is a locally compact metrizable admissible subgyrogroup generated from $\mathscr U$ such that the quotient space $G/H$ is sequential, then $G$ is also sequential.
\end{theorem}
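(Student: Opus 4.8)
The plan is to exploit the perfect mapping from Lemma \ref{4yl1} together with the fact that sequentiality is an inverse invariant of perfect maps, localized around the identity, and then to globalize by homogeneity. First I would recall that, since $H$ is locally compact and metrizable, by Lemma \ref{4yl1} there is an open neighborhood $U$ of $0$ in $G$ such that $\pi(\overline{U})$ is closed in $G/H$ and the restriction $\pi\restriction_{\overline{U}}:\overline{U}\to\pi(\overline{U})$ is a perfect mapping. Since $G/H$ is sequential and $\pi(\overline{U})$ is closed in $G/H$, the subspace $\pi(\overline{U})$ is sequential (a closed subspace of a sequential space is sequential). Now perfect maps preserve sequentiality in the inverse direction: if $f:X\to Y$ is perfect and $Y$ is sequential, then $X$ is sequential — this is a standard fact (for instance, a perfect preimage of a sequential space is sequential because perfect maps have compact, hence first-countable-friendly, fibers and are closed, so sequentially closed sets pull back appropriately). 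Applying this to $\pi\restriction_{\overline{U}}$ gives that $\overline{U}$ is sequential.

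Next I would pass from $\overline{U}$ to an open set. Let $V$ be an open symmetric neighborhood of $0$ with $\overline{V}\subset U$ (available since $G$ is regular); then $V$ is an open subspace of the sequential space $\overline{U}$, and an open subspace of a sequential space is sequential, so $V$ is a sequential neighborhood of $0$. Thus $G$ is \emph{locally sequential}: every point has a sequential open neighborhood, because for any $g\in G$ the left translation $x\mapsto g\oplus x$ is a homeomorphism of $G$ carrying the sequential neighborhood $V$ of $0$ to a sequential open neighborhood of $g$ (here we use that strongly topological gyrogroups, being rectifiable/homogeneous spaces, have left translations that are homeomorphisms).

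Finally I would show that a topological space which is locally sequential is sequential. The argument: let $A\subset G$ be sequentially closed; I must show $A$ is closed, i.e. $G\setminus A$ is open. Fix $x\in G\setminus A$ and let $W$ be a sequential open neighborhood of $x$. Then $A\cap W$ is sequentially closed in $W$ (a sequence in $A\cap W$ converging in $W$ converges in $G$ to a point of $A$ by sequential closedness of $A$, and to a point of $W$ since $W$ is open hence contains the limit), so $A\cap W$ is closed in $W$; hence $W\setminus(A\cap W)=W\setminus A$ is open in $W$, and since $W$ is open in $G$ it is open in $G$. As $x\in W\setminus A\subset G\setminus A$, this exhibits $G\setminus A$ as a neighborhood of each of its points, so $G\setminus A$ is open and $A$ is closed. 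Therefore $G$ is sequential.

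The main obstacle I anticipate is the first step: carefully justifying that the perfect preimage $\overline{U}$ of the sequential space $\pi(\overline{U})$ is sequential. One should either cite a standard reference for "perfect preimages of sequential spaces are sequential" or give the short direct argument using that $\pi\restriction_{\overline{U}}$ is closed with compact fibers — given a sequentially closed $B\subset\overline{U}$, show $(\pi\restriction_{\overline{U}})(B)$ is sequentially closed in $\pi(\overline{U})$ (using compactness of fibers to extract, from a sequence in $B$ whose images converge, a subsequence in $B$ that converges), hence closed, hence $B=(\pi\restriction_{\overline{U}})^{-1}((\pi\restriction_{\overline{U}})(B))\cap(\text{something})$ is closed. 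The rest of the argument is routine point-set topology combined with the homogeneity already established for strongly topological gyrogroups in the cited literature.
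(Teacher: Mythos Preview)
Your overall architecture matches the paper's: apply Lemma~\ref{4yl1} to get a perfect map $\pi|_{\overline{U}}:\overline{U}\to\pi(\overline{U})$, show $\overline{U}$ (or a smaller $\overline{V}$) is sequential, then globalize by homogeneity. The homogeneity step and the ``locally sequential $\Rightarrow$ sequential'' step are fine.

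The genuine gap is in the key step. The assertion that ``a perfect preimage of a sequential space is sequential'' is \emph{false} in general: the map from $\beta\mathbb{N}$ to a point is perfect, the target is sequential, but $\beta\mathbb{N}$ has no nontrivial convergent sequences. Compactness of fibers alone is not ``first-countable-friendly''; what is needed here is precisely that the fibers $(b\oplus H)\cap\overline{U}$ are \emph{metrizable}, and this must be used explicitly. Your fallback sketch also breaks at the last line: even once you know $\pi(B)$ is closed, you only get that $\pi^{-1}(\pi(B))$ is closed, and $B$ is typically a proper subset of this saturation---there is no candidate for the ``$(\text{something})$'' that would cut it back down to $B$.

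The paper handles this by a more delicate contradiction argument (its Claims~1--3). Given a sequentially closed, non-closed $A\subset\overline{V}$ and $x\in\overline{A}\setminus A$, it first looks at $A$ intersected with the single fiber through $x$; since that fiber is metrizable (hence sequential), this intersection is genuinely closed, so one can excise it with a closed neighborhood $\overline{W}$ of $x$. The remaining piece $C=\overline{W}\cap A$ is still sequentially closed with $x\in\overline{C}\setminus C$, but now $C$ misses the fiber through $x$, so $\pi(x)\in\overline{\pi(C)}\setminus\pi(C)$. One then shows (Claim~2, using metrizability of fibers via Claim~1 to extract convergent subsequences) that $\pi(C)$ is sequentially closed, hence closed in the sequential space $\pi(\overline{V})$---contradiction. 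You should replace your appeal to a nonexistent general fact with this fiber-separation argument, making the metrizability of $H$ do real work.
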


\begin{proof}
By the hypothesis, we assume that $G$ is a strongly topological gyrogroup with a symmetric neighborhood base $\mathscr U$ at $0$. It follows from Lemma \ref{4yl1} that there is an open neighborhood $U$ of the identity element $0$ in $G$ such that $\pi |_{\overline{U}}:\overline{U}\rightarrow \pi (\overline{U})$ is a perfect mapping and $\pi (\overline{U})$ is closed in $G/H$.

{\bf Claim 1.} Assume that $\{x_{n}\}_{n}$ is a sequence in $\overline{U}$ such that $\{\pi (x_{n})\}_{n}$ is a convergent sequence in $\pi (\overline{U})$. If $x$ is an accumulation point of the sequence $\{x_{n}\}_{n}$, then there is a subsequence of $\{x_{n}\}_{n}$ which converges to $x$.

Since $\pi |_{\overline{U}}$ is perfect, every subsequence of $\{x_{n}\}_{n}$ has an accumulation point in $\overline{U}$. Put $F=\pi^{-1}(\pi (x))\cap \overline{U}$. By the assumption, $\pi^{-1}(\pi (x))=x\oplus H$ is metrizable. Since every topological gyrogroup is regular, there exists a sequence $\{U_{k}\}_{k}$ of open subsets in $G$ such that $\overline{U_{k+1}}\subset U_{k}$ for each $k\in \mathbb{N}$ and $\{x\}=F\cap \bigcap_{k\in \mathbb{N}}U_{k}$. Choose a subsequence $\{x_{n_{k}}\}_{k}$ of $\{x_{n}\}_{n}$ such that $x_{n_{k}}\in U_{k}$ for each $k\in \mathbb{N}$. For an arbitrary accumulation point $p$ of a subsequence of the sequence $\{x_{n_{k}}\}_{k}$, we have $\pi (p)=\pi (x)$ and $p\in \bigcap _{k\in \mathbb{N}}\overline{U_{k}}$. Thus $p=x$. Therefore, $x$ is the unique accumulation point of every subsequence of $\{x_{n_{k}}\}_{k}$, proving that $x_{n_{k}}\rightarrow x$.

Choose an open neighborhood $V$ of $0$ such that $\overline{V}\subset U$.

{\bf Claim 2.} If $C$ is sequentially closed in $\overline{V}$, then $\pi (C)$ is closed in $\pi (\overline{V})$.

Suppose that $\{y_{n}\}_{n}$ is a sequence in $\pi (C)$ such that $y_{n}\rightarrow y$ in $\pi (\overline{V})$. Choose $x_{n}\in C$ with $\pi (x_{n})=y_{n}$ for each $n\in \mathbb{N}$. Since every subsequence of the sequence $\{x_{n}\}_{n}$ has an accumulation point, it follows from Claim 1 that there exist a point $x\in \pi^{-1}(y)$ and a subsequence $\{x_{n_{k}}\}_{k}$ of $\{x_{n}\}_{n}$ such that $x_{n_{k}}\rightarrow x$. Since $C$ is sequentially closed, we obtain $x\in C$ and $y\in \pi (C)$. Therefore, $\pi (C)$ is sequentially closed in $\pi (\overline{V})$. Since $\pi |_{\overline{U}}:\overline{U}\rightarrow \pi (\overline{U})$ is a closed mapping and $\pi (\overline{U})$ is closed in $G/H$, $\pi (\overline{V})$ is closed in $G/H$. Since $G/H$ is sequential, $\pi (\overline{V})$ is also sequential and then $\pi (C)$ is closed in $\pi (\overline{V})$.

{\bf Claim 3.} $\overline{V}$ is a sequential subspace.

Suppose on the contrary, there is a non-closed and sequentially closed subset $A$ of $\overline{V}$. Then there exists a point $x$ such that $x\in cl_{\overline{V}}(A)\setminus A$. It is clear that $cl_{\overline{V}}(A)=\overline{A}$. Let $f=\pi |_{\overline{V}}:\overline{V}\rightarrow \pi (\overline{V})$ and $B=A\cap f^{-1}(f(x))$. Since $B$ is a closed subset of $A$, $B$ is sequentially closed. Moreover, the fiber $f^{-1}(f(x))=(\pi^{-1}(\pi(x)))\cap \overline{V}$ is sequential, so $B$ is closed in $\overline{V}$. Since $x\not \in B$, there exists an open neighborhood $W$ of $x$ in $\overline{V}$ such that $\overline{W}\cap B=\emptyset$. Let $C=\overline{W}\cap A$, then $C$ is also sequentially closed as a closed subset of $A$ and $x\in \overline{C}\setminus C$. Therefore, $C\cap f^{-1}(f(x))=\overline{W}\cap B=\emptyset$, then $f(x)\in \overline{f(C)}\setminus f(C)$. So $f(C)=\pi (C)$ is not closed in $\pi (\overline{V})$ which is contradict with Claim 2.

Since $G$ is homogeneous and by Claim 3, we obtain that $G$ is a locally sequential space. Hence, $G$ is sequential space.
\end{proof}

\begin{lemma}\cite[Proposition 4.7.18]{AA}\label{i}
Suppose that $X$ is a regular space, and that $f: X\rightarrow Y$ is a closed mapping. Suppose also that $b\in X$ is a $G_{\delta}$-point in the space $F=f^{-1}(f(b))$ (i.e., the singleton $\{b\}$ is a $G_{\delta}$-set in the space $F$) and $F$ is Fr\'echet-Urysohn at $b$. If the space $Y$ is strongly Fr\'echet-Urysohn, then $X$ is Fr\'echet-Urysohn at $b$.
\end{lemma}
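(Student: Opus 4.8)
The plan is to verify that $X$ is Fr\'echet-Urysohn at $b$ directly: given $A\subset X$ with $b\in\overline{A}$, I must produce a sequence in $A$ converging to $b$. First I would dispose of the easy case. If $b\in\overline{A\cap F}$, then since $F$ is Fr\'echet-Urysohn at $b$ there is a sequence in $A\cap F\subset A$ converging to $b$ in $F$, hence in $X$, and we are done. So I may assume $b\notin\overline{A\cap F}$; choosing an open neighborhood $O$ of $b$ with $O\cap A\cap F=\emptyset$ and replacing $A$ by $A\cap O$ (which still has $b$ in its closure since $O$ is open), I reduce to the case $A\cap F=\emptyset$, while still having $f(b)\in\overline{f(A)}$ by continuity of $f$.

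Next I would exploit the two hypotheses on the fibre. Since $\{b\}$ is a $G_{\delta}$ in $F$, write $\{b\}=F\cap\bigcap_{n}O_{n}$ with each $O_{n}$ open in $X$, and use regularity of $X$ to shrink these to a decreasing sequence $\{W_{n}\}$ of open sets with $\overline{W_{n+1}}\subset W_{n}$ and still $F\cap\bigcap_{n}W_{n}=\{b\}$; note that then $\bigcap_{n}\overline{W_{n}}=\bigcap_{n}W_{n}$. Put $A_{n}=A\cap W_{n}$. Because $b\in W_{n}$ and $b\in\overline{A}$, we have $b\in\overline{A_{n}}$ for every $n$, the family $\{A_{n}\}$ is decreasing, and applying $f$ gives $f(b)\in\bigcap_{n}\overline{f(A_{n})}$ with $\{f(A_{n})\}$ a decreasing sequence of subsets of $Y$. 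Since $Y$ is strongly Fr\'echet-Urysohn at $f(b)$, there are points $y_{n}\in f(A_{n})$ with $y_{n}\to f(b)$; choose $x_{n}\in A_{n}$ with $f(x_{n})=y_{n}$.

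The heart of the matter---and the step I expect to be the main obstacle---is to show that the sequence $\{x_{n}\}$ itself converges to $b$ (after which we are done, as each $x_{n}\in A$). Suppose not: there is an open neighborhood $U$ of $b$ and an infinite set $I\subset\mathbb{N}$ with $x_{n}\notin U$ for $n\in I$, and I set $E=\{x_{n}:n\in I\}$. On one hand $E\subset X\setminus U$, which is closed, so $b\notin\overline{E}$. On the other hand, $x_{n}\in W_{m}$ whenever $n\geq m$, so $E\setminus W_{m}$ is finite for each $m$, whence every accumulation point of $E$ lies in $\bigcap_{m}\overline{W_{m}}=\bigcap_{m}W_{m}$. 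Since $f$ is a closed map, $f(\overline{E})$ is closed in $Y$ and contains every $y_{n}$ with $n\in I$, hence contains their limit $f(b)$; pick $x\in\overline{E}$ with $f(x)=f(b)$, i.e.\ $x\in F\cap\overline{E}$. If $x\in E$ then $x\in A$, contradicting $A\cap F=\emptyset$; therefore $x$ is an accumulation point of $E$, so $x\in F\cap\bigcap_{m}W_{m}=\{b\}$, forcing $x=b\in\overline{E}$ and contradicting $b\notin\overline{E}$. Hence no such $U$ exists, $x_{n}\to b$, and $X$ is Fr\'echet-Urysohn at $b$. The only genuinely delicate bookkeeping is the regular-space shrinking of the $G_{\delta}$ in the second paragraph and, in the last paragraph, the interplay between closedness of $f$ and the $G_{\delta}$ structure of the fibre $F$ that pins the stray limit point down to $b$.
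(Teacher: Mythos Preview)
The paper does not prove this lemma; it is cited verbatim from \cite[Proposition~4.7.18]{AA}, so there is no in-paper argument to compare against. Your proof is correct and is essentially the classical argument: the case split on whether $b\in\overline{A\cap F}$, the regular shrinking of the fibre $G_{\delta}$ to obtain the nested $\{W_{n}\}$, the use of the strongly Fr\'echet--Urysohn property of $Y$ to pick $y_{n}\to f(b)$, and the final step where closedness of $f$ forces a point of $\overline{E}$ into the fibre and the $G_{\delta}$ structure pins it to $b$---all of this matches the standard proof found in the cited reference.
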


\begin{theorem}
Suppose that $G$ is a strongly topological gyrogroup with a symmetric neighborhood base $\mathscr U$ at $0$. Suppose further that $H$ is a locally compact metrizable admissible subgyrogroup generated from $\mathscr U$ such that the quotient space $G/H$ is strongly Fr\'echet-Urysohn. Then the space $G$ is also strongly Fr\'echet-Urysohn.
\end{theorem}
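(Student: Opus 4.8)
The plan is to mimic the structure of the proof of Theorem \ref{4dl5}, replacing "sequential'' by "Fr\'echet-Urysohn'' and using Lemma \ref{i} as the transfer tool at each fiber. By Lemma \ref{4yl1} there is an open neighborhood $U$ of $0$ such that $\pi|_{\overline{U}}:\overline{U}\to\pi(\overline{U})$ is a perfect mapping and $\pi(\overline{U})$ is closed in $G/H$. Choose an open neighborhood $V$ of $0$ with $\overline{V}\subset U$; then $f=\pi|_{\overline{V}}:\overline{V}\to\pi(\overline{V})$ is still a closed (indeed perfect) mapping onto a closed subspace $\pi(\overline{V})$ of $G/H$. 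Since $G/H$ is strongly Fr\'echet-Urysohn and this property is hereditary to closed subspaces (a closed subset of a strongly Fr\'echet-Urysohn space is again strongly Fr\'echet-Urysohn), $\pi(\overline{V})$ is strongly Fr\'echet-Urysohn.

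Next I would verify the fiber hypotheses of Lemma \ref{i} for $f$ at an arbitrary point $b\in\overline{V}$. The fiber is $F=f^{-1}(f(b))=(\pi^{-1}(\pi(b)))\cap\overline{V}=(b\oplus H)\cap\overline{V}$, a subspace of $b\oplus H$, which is homeomorphic to $H$ and hence metrizable by hypothesis. A metrizable space is Fr\'echet-Urysohn, so $F$ is Fr\'echet-Urysohn at $b$; and in a metrizable space every point is a $G_\delta$-point, so $\{b\}$ is a $G_\delta$-set in $F$. Since $\overline{V}$ is regular (being a subspace of a topological gyrogroup, which is regular), Lemma \ref{i} applies and yields that $\overline{V}$ is Fr\'echet-Urysohn at $b$. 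As $b\in\overline{V}$ was arbitrary, $\overline{V}$ is a Fr\'echet-Urysohn space, hence a Fr\'echet-Urysohn neighborhood of $0$ in $G$.

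Finally, since a strongly topological gyrogroup is homogeneous, every point of $G$ has a Fr\'echet-Urysohn neighborhood, i.e.\ $G$ is locally Fr\'echet-Urysohn; being locally Fr\'echet-Urysohn is equivalent to being Fr\'echet-Urysohn, so $G$ is Fr\'echet-Urysohn. To upgrade this to \emph{strongly} Fr\'echet-Urysohn, I would argue directly with decreasing sequences $\{A_n\}_n$ of subsets of $G$ with $x\in\bigcap_n\overline{A_n}$: by homogeneity translate so that $x$ lies in the interior of $\overline{V}$, replace each $A_n$ by $A_n\cap V$ (this does not change membership of $x$ in the closures once we have localized near $x$), push forward to get $\pi(x)\in\bigcap_n\overline{\pi(A_n\cap V)}$ in the strongly Fr\'echet-Urysohn space $\pi(\overline{V})$, extract a sequence $\pi(a_n)\in\pi(A_n\cap V)$ converging to $\pi(x)$, and then lift it through the perfect fiber-metrizable map $f$ using the selection argument of Claim 1 in Theorem \ref{4dl5} (accumulation points of the preimage sequence exist by properness and are pinned down to a single limit in the fiber $(x\oplus H)\cap\overline{V}$ by its metrizability) to obtain a sequence $a_n\in A_n\cap V\subset A_n$ converging to $x$.

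I expect the main obstacle to be precisely this last lifting step for the \emph{strongly} Fr\'echet-Urysohn conclusion: Lemma \ref{i} only delivers the plain Fr\'echet-Urysohn property at a point, so the strengthening to the strongly Fr\'echet-Urysohn property for decreasing sequences has to be done by hand, combining the perfectness of $\pi|_{\overline{V}}$, the metrizability of the fibers, and a diagonal choice of convergent subsequences as in Claim 1 of Theorem \ref{4dl5}; care is needed to ensure the selected points come from the correct sets $A_n$ and that the single limit point in the fiber is the prescribed $x$.
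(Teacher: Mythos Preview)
Your argument up to the conclusion that $G$ is Fr\'echet-Urysohn is essentially the paper's: apply Lemma~\ref{4yl1}, check that each fiber $(b\oplus H)\cap\overline{U}$ is metrizable so that $b$ is a $G_\delta$-point in it, and invoke Lemma~\ref{i}. (Your extra passage from $U$ to a smaller $\overline{V}\subset U$ is harmless but unnecessary; the paper works directly with $\overline{U}$.)

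The divergence is in the upgrade from Fr\'echet-Urysohn to \emph{strongly} Fr\'echet-Urysohn. The paper does not attempt any lifting argument here; it simply quotes \cite[Corollary~5.2]{LF1}: every Fr\'echet-Urysohn topological gyrogroup (indeed, rectifiable space) is strongly Fr\'echet-Urysohn. That one line finishes the proof.

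Your proposed lifting argument, as written, has a real gap. Given $y_n\in\pi(A_n\cap V)$ with $y_n\to\pi(x)$, you choose preimages $a_n\in A_n\cap V$ and then assert that ``accumulation points \dots\ are pinned down to a single limit in the fiber $(x\oplus H)\cap\overline{V}$ by its metrizability.'' Metrizability of the fiber does \emph{not} force this: the compact fiber may contain many points, and your preimages $a_n$ can accumulate at any of them, not at $x$. Claim~1 of Theorem~\ref{4dl5} does not help as invoked, since it \emph{assumes} $x$ is already an accumulation point and then extracts a subsequence; it does not show that $x$ must be one. The argument can be repaired by first fixing a decreasing $G_\delta$-sequence $\{W_k\}$ for $x$ in the fiber and applying the strongly Fr\'echet-Urysohn property of $\pi(\overline{V})$ to the decreasing sets $\pi(A_n\cap V\cap W_n)$ rather than to $\pi(A_n\cap V)$; then every accumulation point of the chosen $a_n\in A_n\cap V\cap W_n$ lies in $F\cap\bigcap_n\overline{W_n}=\{x\}$, and padding along the decreasing $A_n$ handles the subsequence issue. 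But this is exactly the ``care'' you flagged as missing, and the paper's route via \cite{LF1} avoids the whole business.
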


\begin{proof}
Suppose that $G$ is a strongly topological gyrogroup with a symmetric neighborhood base $\mathscr U$ at $0$. It follows from Lemma \ref{4yl1} that there is an open neighborhood $U$ of the identity element $0$ in $G$ such that $\pi |_{\overline{U}}:\overline{U}\rightarrow \pi (\overline{U})$ is a perfect mapping and $\pi (\overline{U})$ is closed in $G/H$.

Put $f=\pi |_{\overline{U}}:\overline{U}\rightarrow \pi (\overline{U})$. Then $f(\overline{U})=\pi (\overline{U})$ is strongly Fr\'echet-Urysohn. For each $b\in \overline{U}$, $f^{-1}(f(b))=\pi^{-1}(\pi (b))\cap \overline{U}=(b\oplus H)\cap \overline{U}$ is metrizable. Therefore, the singleton $\{b\}$ is a $G_{\delta}$-set in the space $f^{-1}(f(b))$. Moreover, since the quotient space $G/H$ is strongly Fr\'echet-Urysohn, the space $G$ is locally Fr\'echet-Urysohn by Lemma \ref{i}. Hence, $G$ is Fr\'echet-Urysohn. Furthermore, every Fr\'echet-Urysohn topological gyrogroup is strongly Fr\'echet-Urysohn by \cite[Corollary 5.2]{LF1}. So $G$ is strongly Fr\'echet-Urysohn.
\end{proof}

\begin{lemma}\cite{LS}\label{i1}
Suppose that $X$ is a regular space, and that $f: X\rightarrow Y$ is a closed mapping. Suppose also that $b\in X$ is a $G_{\delta}$-point in the space $F=f^{-1}(f(b))$ (i.e., the singleton $\{b\}$ is a $G_{\delta}$-set in the space $F$) and $F$ is countably compact and strictly Fr\'echet-Urysohn at $b$. If the space $Y$ is strictly Fr\'echet-Urysohn at $f(b)$, then $X$ is strictly Fr\'echet-Urysohn at $b$.
\end{lemma}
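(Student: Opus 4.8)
The plan is to imitate Lemma~\ref{i}, but with the single Fr\'echet sequence produced there replaced by an entire sequence that must be lifted through $f$, and with countable compactness of the fibre used to recover convergence of the lift. Write $F=f^{-1}(f(b))$; since $Y$ is Hausdorff, $F$ is closed in $X$, and as $X$ is regular, so is $F$.

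First I would fix the neighbourhood structure coming from the $G_\delta$-condition. Choose open subsets $O_k$ of $F$ with $\bigcap_k O_k=\{b\}$ and, passing to finite intersections, assume the $O_k$ decrease; pick open subsets $G_k$ of $X$ with $G_k\cap F=O_k$; and, using regularity of $X$, build recursively open neighbourhoods $W_k$ of $b$ in $X$ with $\overline{W_{k+1}}\subset W_k\cap G_{k+1}$. Then the $W_k$ decrease, $\overline{W_{k+1}}\subset W_k$, $W_k\cap F\subset O_k$, and hence $\bigcap_k W_k\cap F=\{b\}$; these are the only properties of the $W_k$ I will use.

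Now let $\{A_n\}_n$ be a sequence of subsets of $X$ with $b\in\bigcap_n\overline{A_n}$, and put $A_n'=A_n\cap W_n$. Since $W_n$ is a neighbourhood of $b$, still $b\in\overline{A_n'}$, hence $f(b)\in\overline{f(A_n')}$ for every $n$ by continuity of $f$. As $Y$ is strictly Fr\'echet-Urysohn at $f(b)$, there are $y_n\in f(A_n')$ with $y_n\to f(b)$; choose $x_n\in A_n'\subset A_n$ with $f(x_n)=y_n$. Then $x_n\in A_n$, $x_n\in W_n$, and $f(x_n)\to f(b)$, and it remains only to prove $x_n\to b$. I would deduce this from two claims. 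Claim (A): every subsequence of $\{x_n\}$ has a cluster point in $X$. Claim (B): the only possible cluster point of a subsequence of $\{x_n\}$ is $b$. Granting them: if some neighbourhood $V$ of $b$ missed $x_n$ for infinitely many $n$, the corresponding subsequence would by (A) have a cluster point, which by (B) is $b$, while all its terms lie in the closed set $X\setminus V$ --- a contradiction. Claim (B) is routine: a cluster point $z$ of a subsequence $\{x_{n_j}\}$ satisfies $f(z)\in\overline{\{f(x_{n_j}):j\ge m\}}$ for every $m$, so $f(z)=f(b)$ by Hausdorffness of $Y$ together with $f(x_{n_j})\to f(b)$, that is, $z\in F$; and since $x_{n_j}\in W_{n_j}$ with $n_j\to\infty$, all but finitely many terms lie in each $W_k$, whence $z\in\bigcap_k\overline{W_k}\subset\bigcap_k W_k$, so $z\in F\cap\bigcap_k W_k=\{b\}$.

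The hard part will be Claim (A): the lifted sequence $\{x_n\}$ a priori lies entirely off the fibre $F$, and one must show it nevertheless clusters on $F$. This is exactly where the closedness of $f$ and the countable compactness of $F$ are used. Given a subsequence I may assume it is injective (a value attained infinitely often is itself a cluster point), so its term-set $S$ is infinite. From $f(x_{n_j})\to f(b)$ and closedness of $f$ one gets $f(b)\in\overline{f(S)}\subset f(\overline S)$, so $\overline S$ meets $F$; refining: if $S\cap F$ is infinite, countable compactness of $F$ yields an accumulation point of $S\cap F$ lying in $F$, which (as $S$ is infinite) is a cluster point of the subsequence; if $S\cap F$ is finite, then $S\setminus F$ is infinite, cofinitely many terms lie in $S\setminus F$, so $f(b)\in\overline{f(S\setminus F)}$, and the same closed-map argument produces a point of $F$ in $\overline{S\setminus F}\setminus(S\setminus F)$, hence an accumulation point of $S$. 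This establishes (A); together with (B) it gives $x_n\to b$, so $X$ is strictly Fr\'echet-Urysohn at $b$. (Incidentally, this route does not seem to use the hypothesis that $F$ is strictly Fr\'echet-Urysohn at $b$, which in any case follows from countable compactness of $F$ together with the $G_\delta$-condition and regularity of $X$.)
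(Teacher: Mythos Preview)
The paper does not give its own proof of this lemma; it is quoted without proof from \cite{LS}. So there is nothing in the present paper to compare your argument against, and I can only comment on the argument itself.

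Your proof is correct. The scheme---trap the lifted points $x_n\in A_n\cap W_n$ inside a decreasing regular nest $\{W_k\}$ that isolates $b$ within the fibre, pass to images and use strict Fr\'echet--Urysohnness of $Y$ at $f(b)$ to pick $y_n\in f(A_n\cap W_n)$ with $y_n\to f(b)$, lift, and then argue via cluster points---is the standard one, and the two claims are handled cleanly. In Claim~(A) the use of closedness of $f$ via $\overline{f(S)}\subset f(\overline S)$ is exactly right, and splitting according to whether $S\cap F$ is infinite (use countable compactness of $F$) or finite (the fibre point produced by the closed-map inclusion is then automatically outside $S$, hence a genuine accumulation point in a $T_1$ space) is fine. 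Claim~(B) is routine as you say.

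Your parenthetical remark is also correct: the separate hypothesis that $F$ be strictly Fr\'echet--Urysohn at $b$ is redundant once one assumes $F$ countably compact, $b$ a $G_\delta$-point of $F$, and $X$ regular, since these already force $F$ to be first-countable at $b$. Presumably the authors of \cite{LS} kept the hypothesis either for symmetry with Lemma~\ref{i} or because their argument invoked it explicitly; your route bypasses it.
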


\begin{theorem}
Suppose that $G$ is a strongly topological gyrogroup with a symmetric neighborhood base $\mathscr U$ at $0$. Suppose further that $H$ is a locally compact metrizable admissible subgyrogroup generated from $\mathscr U$ such that the quotient space $G/H$ is strictly Fr\'echet-Urysohn, then $G$ is also strictly Fr\'echet-Urysohn.
\end{theorem}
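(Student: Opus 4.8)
The plan is to follow verbatim the template of the two preceding theorems, with Lemma~\ref{i} replaced by its ``strictly Fr\'echet-Urysohn'' analogue, Lemma~\ref{i1}. First I would apply Lemma~\ref{4yl1} to obtain an open neighborhood $U$ of the identity $0$ in $G$ such that $\pi(\overline{U})$ is closed in $G/H$ and the restriction $f:=\pi|_{\overline{U}}\colon \overline{U}\to\pi(\overline{U})$ is a perfect mapping; in particular $f$ is a closed mapping. Since $G/H$ is strictly Fr\'echet-Urysohn and strict Fr\'echet-Urysohnness is inherited by arbitrary subspaces (given $A_n\subset Y\subset G/H$ with $f(b)\in\bigcap_n\operatorname{cl}_Y(A_n)$, the ambient closures also contain $f(b)$, so a convergent selection in $G/H$ stays in $Y$), the subspace $\pi(\overline{U})$ is strictly Fr\'echet-Urysohn, hence strictly Fr\'echet-Urysohn at $f(b)$ for every $b\in\overline{U}$.

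Next I would verify the fibre hypotheses of Lemma~\ref{i1} at an arbitrary point $b\in\overline{U}$. The fibre $F=f^{-1}(f(b))=(b\oplus H)\cap\overline{U}$ is a closed subspace of $b\oplus H$, which is homeomorphic to $H$ and therefore metrizable; thus $F$ is metrizable, so it is first-countable and hence strictly Fr\'echet-Urysohn at $b$, and the singleton $\{b\}$ is a $G_\delta$-set in $F$. Moreover, since $f$ is perfect, $F$ is compact, hence countably compact. As every topological gyrogroup is regular, so is the subspace $\overline{U}$, and therefore all hypotheses of Lemma~\ref{i1} are met (with $X=\overline{U}$, $Y=\pi(\overline{U})$, $f$ as above); it follows that $\overline{U}$ is strictly Fr\'echet-Urysohn at $b$. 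Since $b\in\overline{U}$ was arbitrary, $\overline{U}$ is a strictly Fr\'echet-Urysohn space.

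Finally I would transfer this property to all of $G$. Because $U\subset\overline{U}$ and strict Fr\'echet-Urysohnness is hereditary, $U$ is strictly Fr\'echet-Urysohn; in particular $0$ has an open neighborhood that is strictly Fr\'echet-Urysohn at $0$. Since $G$ is a homogeneous space, every point $x\in G$ has an open neighborhood (a left translate of $U$) that is strictly Fr\'echet-Urysohn at $x$. Now strict Fr\'echet-Urysohnness at a point is a local property: if $x$ has an open neighborhood $W$ that is strictly Fr\'echet-Urysohn at $x$, then for any subsets $A_n$ of $G$ with $x\in\bigcap_n\overline{A_n}$ one has $x\in\overline{A_n\cap W}$ for each $n$ (since $W$ is open), so a selection $x_n\in A_n\cap W$ converging to $x$ in $W$ also converges to $x$ in $G$. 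Hence $G$ is strictly Fr\'echet-Urysohn at every point, i.e., $G$ is strictly Fr\'echet-Urysohn.

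I expect the only genuinely delicate points to be (a) confirming that strict Fr\'echet-Urysohnness is both hereditary and local --- this is exactly what allows the passage to the subspaces $\pi(\overline{U})$, $F$, $U$ and the pull-back of the conclusion to $G$ --- and (b) observing that the fibre $F$ is simultaneously countably compact (from perfectness of $f$, via Lemma~\ref{4yl1}) and first-countable (from metrizability of $H$), since Lemma~\ref{i1} requires both. Beyond these verifications, the argument is a direct transcription of the proofs already given for the sequential and strongly Fr\'echet-Urysohn cases, and no analogue of the ``Fr\'echet-Urysohn $\Rightarrow$ strongly Fr\'echet-Urysohn'' step (used via \cite[Corollary 5.2]{LF1} in the previous theorem) is needed here.
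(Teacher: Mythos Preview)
Your proposal is correct and follows essentially the same approach as the paper's proof: apply Lemma~\ref{4yl1} to obtain the perfect map $f=\pi|_{\overline{U}}$, observe that each fibre $(b\oplus H)\cap\overline{U}$ is compact and metrizable so that Lemma~\ref{i1} applies, conclude that $\overline{U}$ is strictly Fr\'echet-Urysohn, and then pass from local to global via homogeneity. Your version is in fact more careful than the paper's, since you spell out explicitly the hereditariness of strict Fr\'echet-Urysohnness (needed for $\pi(\overline{U})$), the countable compactness of the fibres, the regularity hypothesis, and the local nature of the property at the end.
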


\begin{proof}
By the hypothesis, we assume that $G$ is a strongly topological gyrogroup with a symmetric neighborhood base $\mathscr U$ at $0$. It follows from Lemma \ref{4yl1} that there is an open neighborhood $U$ of the identity element $0$ in $G$ such that $\pi |_{\overline{U}}:\overline{U}\rightarrow \pi (\overline{U})$ is a perfect mapping and $\pi (\overline{U})$ is closed in $G/H$.

Put $f=\pi |_{\overline{U}}:\overline{U}\rightarrow \pi (\overline{U})$. Then $f(\overline{U})=\pi (\overline{U})$ is strictly Fr\'echet-Urysohn. For each $b\in \overline{U}$, $f^{-1}(f(b))=\pi^{-1}(\pi (b))\cap \overline{U}=(b\oplus H)\cap \overline{U}$ is compact and metrizable. It follows from Lemma \ref{i1} that $\overline{U}$ is strictly Fr\'echet-Urysohn. Therefore, $G$ is locally strictly Fr\'echet-Urysohn and $G$ is strictly Fr\'echet-Urysohn.
\end{proof}

\begin{definition}\cite{CGD, LDJ}
A topological space $(X,\tau)$ is {\it semi-stratifiable} if there is a function $S:N\times \tau \rightarrow \{closed~~ subsets ~~of~~X\}$ such that

\smallskip
(a) if $U\in \tau$, then $U=\bigcup_{n=1}^{\infty}S(n,U)$;

\smallskip
(b) if $U,V\in \tau$ and $U\subset V$, then $S(n,U)\subset S(n,V)$ for each $n\in \mathbb{N}$.

The function $S$ is called a {\it semistratification of $X$}. (If, in addition, the function $S$ satisfies $U=\bigcup_{n=1}^{\infty}[S(n,U)]^{\circ}$ for each $U\in \tau$, then $S$ is called a {\it stratification of $X$} and $X$ is said to be {\it stratifiable} \cite{BCR}.)
\end{definition}

The concept of {\it $k$-semistratifiable space} was introduced in \cite{LDL}.

\begin{theorem}\label{4dl6}
Let $G$ be a strongly topological gyrogroup with a symmetric neighborhood base $\mathscr U$ at $0$. Suppose that $H$ is a locally compact metrizable admissible subgyrogroup generated from $\mathscr U$ such that the quotient space $G/H$ has property $\mathcal{P}$, where $\mathcal{P}$ is a topological property. Then the space $G$ is locally in $\mathcal{P}$ if $\mathcal{P}$ satisfies the following:

\smallskip
(1) $\mathcal{P}$ is closed hereditary;

\smallskip
(2) $\mathcal{P}$ contains point $G_{\delta}$-property, and

\smallskip
(3) let $f:X\rightarrow Y$ be a perfect mapping, if $X$ has $G_{\delta}$-diagonal and $Y$ is $\mathcal{P}$, then $X$ is $\mathcal{P}$.

\end{theorem}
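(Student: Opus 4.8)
The plan is to localize the problem to a closed neighborhood of $0$, check the two pull‑back ingredients ($\mathcal P$ for the quotient piece, a $G_\delta$‑diagonal for the total piece), apply hypothesis~(3), and then spread the conclusion around by homogeneity. By Lemma~\ref{4yl1}, since $H$ is a locally compact admissible subgyrogroup generated from $\mathscr U$, there is an open neighborhood $U$ of $0$ in $G$ such that $\pi(\overline U)$ is closed in $G/H$ and $f:=\pi|_{\overline U}\colon\overline U\to\pi(\overline U)$ is a perfect mapping. Since $\mathcal P$ is closed hereditary (hypothesis~(1)) and $G/H$ has $\mathcal P$, the closed subspace $\pi(\overline U)$ also has $\mathcal P$. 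Next I would describe the fibers of $f$: for $b\in\overline U$ one has $f^{-1}(f(b))=(b\oplus H)\cap\overline U$, a closed subset of $b\oplus H$; since $b\oplus H$ is homeomorphic to $H$ (via the left translation by $b$, a self‑homeomorphism of $G$) and $H$ is metrizable, each fiber is metrizable, and being a fiber of a perfect map it is compact; in particular every fiber is first‑countable.

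The crucial step is to show that $\overline U$, and then all of $G$, has countable pseudocharacter. By hypothesis~(2) the space $\pi(\overline U)$ has the point‑$G_\delta$ property, so $\{f(b)\}$ is a $G_\delta$‑set in $\pi(\overline U)$, and by continuity $f^{-1}(f(b))$ is a $G_\delta$‑set in $\overline U$. Since the fiber $f^{-1}(f(b))$ is metrizable, $\{b\}$ is a $G_\delta$‑set in it, hence $\{b\}$ is a $G_\delta$‑set in $\overline U$. Taking $b=0$ and intersecting the relevant open sets with the genuinely open set $U\subset\overline U$ shows $\{0\}$ is a $G_\delta$‑set in $U$, hence in $G$; by homogeneity of $G$ this means $G$ has countable pseudocharacter.

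Now I would invoke the known fact (recalled in the Introduction, \cite{BL1}) that every strongly topological gyrogroup with countable pseudocharacter is submetrizable; a submetrizable space has a $G_\delta$‑diagonal, and this is inherited by the subspace $\overline U$. At this point hypothesis~(3) applies verbatim to the perfect mapping $f\colon\overline U\to\pi(\overline U)$: the domain $\overline U$ has a $G_\delta$‑diagonal and the range $\pi(\overline U)$ has $\mathcal P$, so $\overline U$ has $\mathcal P$. Finally, $\overline U$ is a neighborhood of $0$, and for each $g\in G$ the left translation $x\mapsto g\oplus x$ is a homeomorphism of $G$, so $g\oplus\overline U$ is a neighborhood of $g$ homeomorphic to $\overline U$ and thus carries $\mathcal P$; hence $G$ is locally in $\mathcal P$.

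The main obstacle is precisely the production of a $G_\delta$‑diagonal on $\overline U$: one cannot in general transfer a $G_\delta$‑diagonal across a perfect map when only the fibers are metrizable and the base merely has the point‑$G_\delta$ property (the split interval, as a trivial perfect preimage of a first‑countable compactum with no $G_\delta$‑diagonal, shows this). The reason hypothesis~(2) nevertheless suffices is the gyrogroup structure: the point‑$G_\delta$ data is used only to force countable pseudocharacter of the \emph{whole} group $G$, and submetrizability of strongly topological gyrogroups then upgrades this to a $G_\delta$‑diagonal that restricts to $\overline U$. A secondary technical point to handle with care is the difference between "open in $\overline U$" and "open in $G$" in the pseudocharacter computation, which is resolved by intersecting with the open set $U$.
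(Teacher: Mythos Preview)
Your proof is correct and follows essentially the same strategy as the paper: establish countable pseudocharacter of $G$, invoke \cite{BL1} to get submetrizability (hence a $G_\delta$-diagonal), use Lemma~\ref{4yl1} to obtain the perfect map $f=\pi|_{\overline U}$, apply hypotheses~(1) and~(3) to conclude $\overline U$ has $\mathcal P$, and finish by homogeneity.

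The only notable difference is in how you obtain the countable pseudocharacter. The paper works globally: since $G/H$ has $\mathcal P$, the point $\{H\}$ is $G_\delta$ in $G/H$, so $H=\pi^{-1}(\{H\})$ is $G_\delta$ in $G$; combining this with a countable local base $\{W_n\cap H\}$ at $0$ in $H$ gives $\{0\}=\bigcap_n(W_n\cap\pi^{-1}(V_n))$ directly. You instead work locally inside $\overline U$, first showing $\{b\}$ is $G_\delta$ in $\overline U$ via the fiber decomposition, then passing to $G$ by intersecting with the open set $U$. Both arguments are valid; the paper's is slightly more direct since it avoids the ``open in $\overline U$ versus open in $G$'' bookkeeping you flagged at the end.
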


\begin{proof}
Suppose that $\pi :G\rightarrow G/H$ is the canonical homomorphism. Since $G/H$ is in $\mathcal{P}$ and $\mathcal{P}$ contains point $G_{\delta}$-property, $\{H\}$ is a $G_{\delta}$-subset in $G/H$, that is, there exists a sequence $\{V_{n}:n\in \mathbb{N}\}$ of open sets in $G/H$ such that $\{H\}=\bigcap_{n\in \mathbb{N}}V_{n}$. Therefore, $H=\bigcap _{n\in \mathbb{N}}\pi^{-1}(V_{n})$. Since $H$ is a metrizable $L$-subgyrogroup of $G$, there is a family $\{W_{n}:n\in \mathbb{N}\}$ of open neighborhoods of the identity element $0$ such that $\{W_{n}\cap H:n\in \mathbb{N}\}$ is an open countable neighborhood base in $H$. Hence, $$\{0\}=\bigcap_{n\in \mathbb{N}}(W_{n}\cap H)=\bigcap_{n\in \mathbb{N}}(W_{n}\cap \pi^{-1}(V_{n})).$$ Then $G$ has point $G_{\delta}$-property. It follows from \cite{BL1} that every strongly topological gyrogroup with countable pseudocharacter is submetrizable. So $G$ has $G_{\delta}$-diagonal.

By Lemma \ref{4yl1}, there is an open neighborhood $U$ of the identity element $0$ in $G$ such that $\pi |_{\overline{U}}:\overline{U}\rightarrow \pi (\overline{U})$ is a perfect mapping and $\pi (\overline{U})$ is closed in $G/H$. Then by (1) and (3), the subspace $\overline{U}$ is in $\mathcal{P}$. Therefore, $G$ is locally in $\mathcal{P}$.
\end{proof}

Note that every stratifiable space, semi-stratifiable space and $\sigma$-space satisfies the conditions in Theorem \ref{4dl6}, respectively.

\begin{corollary}\label{4tl1}
Suppose that $G$ is a strongly topological gyrogroup with a symmetric neighborhood base $\mathscr U$ at $0$. Suppose further that $H$ is a locally compact metrizable admissible subgyrogroup generated from $\mathscr U$ such that the quotient space $G/H$ is a stratifiable space (semi-stratifiable space, $\sigma$-space). Then $G$ is a local stratifiable space (semi-stratifiable space, $\sigma$-space).
\end{corollary}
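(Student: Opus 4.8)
The plan is to obtain this statement as an immediate consequence of Theorem~\ref{4dl6}. Thus the whole task reduces to checking that each of the three properties in question --- being \emph{stratifiable}, being \emph{semi-stratifiable}, and being a \emph{$\sigma$-space} --- satisfies conditions (1), (2) and (3) of Theorem~\ref{4dl6}. Once this is verified, applying that theorem to $G$ and $H$ with $\mathcal{P}$ equal to each of these properties in turn yields at once that $G$ is locally stratifiable, respectively locally semi-stratifiable, respectively a local $\sigma$-space.

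I would first deal with condition (1), closed hereditariness, which in fact holds in the stronger form of full hereditariness in all three cases. If $S$ is a (semi-)stratification of a space $X$ and $A\subset X$, then $(n,U\cap A)\mapsto S(n,U)\cap A$ (for $U$ open in $X$) is a (semi-)stratification of the subspace $A$; and if $\mathcal{N}=\bigcup_{n}\mathcal{N}_{n}$ is a $\sigma$-locally finite network for a regular space $X$, then the traces $\{N\cap A:N\in\mathcal{N}_{n}\}$ are locally finite in $A$ and their union over $n$ is a network for $A$, while subspaces of regular spaces are regular. So all three properties are closed hereditary.

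Next I would verify condition (2): each of the three properties forces every closed subset of $X$ to be a $G_{\delta}$-set, and hence --- since the spaces here are Hausdorff, so singletons are closed --- forces every one-point set to be a $G_{\delta}$-set, which is exactly the point $G_{\delta}$-property. For a (semi-)stratifiable $X$ one applies the defining function to the open set $X\setminus\{x\}$ and reads off that it is a countable union of closed sets. For a regular $\sigma$-space one first uses regularity to replace a $\sigma$-locally finite network by a $\sigma$-locally finite network of closed sets, and then, for any open $U$, collects the members of this network lying inside $U$ and groups them by level; since a locally finite union of closed sets is closed, this exhibits $U$ as an $F_{\sigma}$-set, so its complement is $G_{\delta}$.

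Condition (3) is the only part I expect to require real work rather than bookkeeping, and it is where I would have to appeal to the structure theory of generalized metric spaces. What is needed is the stability statement: if $f:X\to Y$ is perfect, $X$ has a $G_{\delta}$-diagonal, and $Y$ is stratifiable (resp., semi-stratifiable, a $\sigma$-space), then so is $X$. The $G_{\delta}$-diagonal hypothesis cannot be dropped in any of the three cases --- a compact space with a $G_{\delta}$-diagonal is metrizable, whereas perfect maps have compact fibres, so without it even a perfect preimage of a point could fail the property. These perfect-preimage theorems are classical, and I would cite them (the survey of Gruenhage together with the papers of Borges, Ceder and Creede already in the references). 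Granting (1)--(3) for each of the three properties, Theorem~\ref{4dl6} gives the corollary directly; the gyrogroup-theoretic content has all been absorbed into Theorem~\ref{4dl6}, so no further work on $G$ itself is required.
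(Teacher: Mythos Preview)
Your proposal is correct and follows exactly the route the paper takes: the corollary is stated immediately after Theorem~\ref{4dl6} with only the one-line remark ``Note that every stratifiable space, semi-stratifiable space and $\sigma$-space satisfies the conditions in Theorem~\ref{4dl6}, respectively,'' and no further proof. Your sketch simply fills in the verification of conditions (1)--(3) that the paper leaves implicit, so there is nothing to compare.
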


\begin{definition}\cite{LS2}
Suppose that $\{\mathscr V_{n}\}$ is a sequence of open covers of a space.

\smallskip
(1) $\{\mathscr V_{n}\}$ is said to be a $G_{\delta}$-\emph{diagonal sequence} for $X$ if $\{x\}=\bigcap_{n\in \mathbb{N}}st(x,\mathscr V_{n})$ for each $x\in X$.

\smallskip
(2) $\{\mathscr V_{n}\}$ is said to be a $KG$-\emph{sequence} for $X$ if $x_{n}\in st(a_{n},\mathscr V_{n})$ for each $n\in \mathbb{N}$, and $x_{n}\rightarrow p$, $a_{n}\rightarrow q$, then $p=q$.
\end{definition}

It was claimed in \cite{LS2} that if $f:X\rightarrow Y$ is a perfect map and $Y$ is a $k$-semistratifiable space, then $X$ is a $k$-semistratifiable space if and only if $X$ has a $KG$-sequence.

\begin{theorem}
Let $G$ be a strongly topological gyrogroup. If $G$ has point $G_{\delta}$-property, $G$ has a $KG$-sequence.
\end{theorem}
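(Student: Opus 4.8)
The plan is to construct the desired $KG$-sequence directly from the point $G_\delta$-property, exploiting the homogeneity of $G$ and the fact that $G$ is a strongly topological gyrogroup so that one may work with a symmetric neighborhood base $\mathscr U$ at $0$ whose members are gyration-invariant. First I would fix, using the point $G_\delta$-property at $0$, a decreasing sequence $\{W_n : n\in\mathbb N\}$ of open symmetric neighborhoods of $0$ with $\bigcap_{n\in\mathbb N}W_n=\{0\}$ and, by continuity of $\oplus$, arrange $W_{n+1}\oplus(W_{n+1}\oplus W_{n+1})\subset W_n$ for each $n$; we may also take each $W_n$ from $\mathscr U$ so that $\mathrm{gyr}[x,y](W_n)=W_n$ for all $x,y\in G$. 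Then for each $n$ define the open cover $\mathscr V_n=\{x\oplus W_n : x\in G\}$ of $G$.

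The key computation is to control the star $st(a,\mathscr V_n)$. If $b\in st(a,\mathscr V_n)$, then $a,b\in x\oplus W_n$ for some $x\in G$, so writing $a=x\oplus u$, $b=x\oplus v$ with $u,v\in W_n$ we get $\ominus a\oplus b=\ominus u\oplus v\in W_n\oplus W_n$ by the left cancellation law and the symmetry of $W_n$; hence $st(a,\mathscr V_n)\subset a\oplus(W_n\oplus W_n)$, where I use the gyration-invariance of $W_n$ exactly as in the displayed computations of Theorem~\ref{3dl3} and Theorem~\ref{3dl2} to move the left translate through. Now suppose $x_n\in st(a_n,\mathscr V_n)$ for each $n$, with $x_n\to p$ and $a_n\to q$. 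From the inclusion just established, $\ominus a_n\oplus x_n\in W_n\oplus W_n$; since $\ominus a_n\oplus x_n\to \ominus q\oplus p$ by joint continuity, and $\{W_n\oplus W_n\}$ shrinks to $\{0\}$ (because $W_{n+1}\oplus W_{n+1}\subset W_{n+1}\oplus(W_{n+1}\oplus W_{n+1})\subset W_n$, so $\bigcap_n \overline{W_n\oplus W_n}\subset\bigcap_n W_{n-1}=\{0\}$ using that $G$ is Hausdorff), I would conclude $\ominus q\oplus p=0$, i.e. $p=q$. This is exactly the $KG$-sequence condition.

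The main obstacle will be the last limiting argument: one must verify that $\ominus a_n\oplus x_n\to 0$ forces the common limit. The subtlety is that $W_n\oplus W_n$ need not be closed, so from $\ominus a_n\oplus x_n\in W_n\oplus W_n$ for all $n$ one cannot immediately place the limit point in $\bigcap_n W_n\oplus W_n$. I would handle this by shifting the index: for any fixed $m$, eventually $\ominus a_n\oplus x_n\in W_n\oplus W_n\subset W_{m}$ once $n\ge m+1$ (using the cascading inclusions $W_{n}\oplus W_{n}\subset W_{n-1}\subset\cdots\subset W_m$), so the tail of the convergent sequence $\{\ominus a_n\oplus x_n\}$ lies in the closed set $\overline{W_m}$, whence its limit $\ominus q\oplus p$ lies in $\overline{W_m}$ for every $m$; then I would observe that by a further application of the cascade $\overline{W_{m+1}}\subset W_m$ (which follows from regularity of $G$, or more directly from $W_{m+1}\oplus W_{m+1}\subset W_m$ and symmetry) so that $\bigcap_m\overline{W_m}=\bigcap_m W_m=\{0\}$. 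Thus $\ominus q\oplus p=0$ and $p=q$, completing the proof.
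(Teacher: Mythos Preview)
Your approach is essentially the same as the paper's: build open covers $\mathscr V_n$ from left translates of a shrinking sequence of gyration-invariant symmetric neighborhoods of $0$, show $st(a,\mathscr V_n)\subset a\oplus(W_n\oplus W_n)$, and then pass to the limit.

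One computational slip: the equality $\ominus a\oplus b=\ominus u\oplus v$ does \emph{not} follow from left cancellation in a gyrogroup, since $\ominus(x\oplus u)\neq(\ominus u)\oplus(\ominus x)$ in general. The correct identity is $\ominus a\oplus b=\mathrm{gyr}[x,u](\ominus u\oplus v)$, obtained from $a\oplus\mathrm{gyr}[x,u](\ominus u\oplus v)=x\oplus(u\oplus(\ominus u\oplus v))=x\oplus v=b$; with $W_n$ gyration-invariant this still lies in $W_n\oplus W_n$, so your conclusion survives. The paper reaches the same containment slightly differently, first showing $x\in a\oplus W_n$ via Lemma~\ref{a}(3) and then $b=x\oplus v\in(a\oplus W_n)\oplus W_n=a\oplus(W_n\oplus W_n)$ using Lemma~\ref{a}(5) and gyration-invariance.

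Your final limiting paragraph is in fact more careful than the paper, which simply asserts ``$\ominus q_n\oplus p_n\in\bigcap_n V_n=\{0\}$'' without the closure/index-shift argument you supply.
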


\begin{proof}
Suppose that $G$ is a strongly topological gyrogroup with a symmetric neighborhood base $\mathscr U$ at $0$. Since $G$ has point $G_{\delta}$-property, there exists a sequence $\{V_{n}\}_{n}$ of open neighborhoods of the identity element $0$ such that $\bigcap_{n\in \mathbb{N}}V_{n}=\{0\}$. For each $V_{n}$, there exists $U_{n}\in \mathscr U$ such that $U_{n}\oplus U_{n}\subset V_{n}$. Put $\mathcal{U}_{n}=\{x\oplus U_{n}:x\in G\}$. It is clear that each $\mathcal{U}_{n}$ is an open cover of $G$.

{\bf Claim.} $\{\mathcal U_{n}\}_{n\in \mathbb{N}}$ is a $KG$-sequence in $G$.

Let $p_{n}\in st(q_{n},\mathcal U_{n})$, where $\{q_{n}\}\rightarrow q$ and $p_{n}\rightarrow p$. For each $n\in \mathbb{N}$, we can find $x_{n}\in G$ such that $p_{n},q_{n}\in (x_{n}\oplus U_{n})$. Then, there are $v_{n},u_{n}\in U_{n}$ such that $p_{n}=x_{n}\oplus u_{n}$ and $q_{n}=x_{n}\oplus v_{n}$. Therefore, by Lemma \ref{a},
\begin{eqnarray}
x_{n}&=&(x_{n}\oplus v_{n})\oplus gyr[x_{n},v_{n}](\ominus v_{n})\nonumber\\
&=&q_{n}\oplus gyr[x_{n},v_{n}](\ominus v_{n})\nonumber\\
&\in &q_{n}\oplus gyr[x_{n},v_{n}](U_{n})\nonumber\\
&=&q_{n}\oplus U_{n}.\nonumber\
\end{eqnarray}
Then,
\begin{eqnarray}
p_{n}&=&x_{n}\oplus u_{n}\nonumber\\
&\in &(q_{n}\oplus U_{n})\oplus u_{n}\nonumber\\
&=&q_{n}\oplus (U_{n}\oplus gyr[U_{n},q_{n}](u_{n}))\nonumber\\
&\subset &q_{n}\oplus (U_{n}\oplus gyr[U_{n},q_{n}](U_{n}))\nonumber\\
&=&q_{n}\oplus (U_{n}\oplus U_{n})\nonumber\\
&\subset &q_{n}\oplus V_{n}.\nonumber
\end{eqnarray}
Therefore, $\ominus q_{n}\oplus p_{n}\in V_{n}$ for each $n\in \mathbb{N}$. Hence, $\ominus q_{n}\oplus p_{n}\in \bigcap_{n\in \mathbb{N}}V_{n}=\{0\}$, that is, $p=q$.

We conclude that $G$ has a $KG$-sequence.
\end{proof}

Naturally, we have the following result.

\begin{corollary}
Suppose that $G$ is a strongly topological gyrogroup with a symmetric neighborhood base $\mathscr U$ at $0$. Suppose further that $H$ is a locally compact metrizable admissible subgyrogroup generated from $\mathscr U$ such that the quotient space $G/H$ is $k$-semistratifiable, then the space $G$ is locally $k$-semistratifiable.
\end{corollary}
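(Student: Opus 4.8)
The plan is to follow the pattern of the proof of Theorem~\ref{4dl6}, but since $k$-semistratifiability is characterised under perfect mappings by the existence of a $KG$-sequence rather than by condition~(3) there, I would feed in the $KG$-sequence supplied by the theorem proved just above in place of a direct $G_{\delta}$-diagonal argument.

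First I would establish that $G$ has point $G_{\delta}$-property. Since $G/H$ is $k$-semistratifiable it is in particular semi-stratifiable, hence has a $G_{\delta}$-diagonal, so $\{H\}$ is a $G_{\delta}$-set in $G/H$; choose open sets $\{V_{n}:n\in\mathbb{N}\}$ in $G/H$ with $\{H\}=\bigcap_{n\in\mathbb{N}}V_{n}$, so that $H=\bigcap_{n\in\mathbb{N}}\pi^{-1}(V_{n})$. As $H$ is metrizable, pick open neighbourhoods $\{W_{n}:n\in\mathbb{N}\}$ of $0$ in $G$ with $\{W_{n}\cap H:n\in\mathbb{N}\}$ a neighbourhood base at $0$ in $H$; then $\{0\}=\bigcap_{n\in\mathbb{N}}(W_{n}\cap\pi^{-1}(V_{n}))$, so $G$ has point $G_{\delta}$-property. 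This repeats verbatim the first half of the argument in Theorem~\ref{4dl6}.

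Next I would apply the theorem proved just above: a strongly topological gyrogroup with point $G_{\delta}$-property has a $KG$-sequence, so $G$ admits a $KG$-sequence $\{\mathcal{U}_{n}\}_{n}$. By Lemma~\ref{4yl1} fix an open neighbourhood $U$ of $0$ in $G$ such that $\pi|_{\overline{U}}\colon\overline{U}\to\pi(\overline{U})$ is perfect and $\pi(\overline{U})$ is closed in $G/H$; by closed-hereditariness of $k$-semistratifiability, $\pi(\overline{U})$ is $k$-semistratifiable. Then I would check that the traces $\{W\cap\overline{U}:W\in\mathcal{U}_{n}\}$ form a $KG$-sequence for the subspace $\overline{U}$: they are open covers of $\overline{U}$ because each $\mathcal{U}_{n}$ covers $G\supseteq\overline{U}$, and $st(a,\{W\cap\overline{U}:W\in\mathcal{U}_{n}\})\subseteq st(a,\mathcal{U}_{n})$ for $a\in\overline{U}$, so the convergences $p_{n}\to p$, $q_{n}\to q$ in $\overline{U}$ are also convergences in $G$ and the defining implication of a $KG$-sequence passes from $G$ to $\overline{U}$. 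By the perfect-mapping characterisation cited from \cite{LS2}, it follows that $\overline{U}$ is $k$-semistratifiable. Finally $\overline{U}$ is a neighbourhood of $0$ (it contains the open set $U$), and since $G$ is homogeneous every point of $G$ has a $k$-semistratifiable neighbourhood, that is, $G$ is locally $k$-semistratifiable.

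The main obstacle I anticipate is purely in the bookkeeping of the last step: one must make sure the $KG$-sequence of the whole space $G$ genuinely restricts to one on the closed subspace $\overline{U}$ before invoking the result of \cite{LS2}, which rests on the elementary fact that stars computed in a subspace lie inside the ambient stars and that restricting the covers to $\overline{U}$ keeps them covers of $\overline{U}$. Beyond that, the argument is a direct combination of the proof of Theorem~\ref{4dl6}, the $KG$-sequence theorem above, Lemma~\ref{4yl1}, and the closed-hereditariness of $k$-semistratifiable spaces.
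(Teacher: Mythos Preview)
Your proposal is correct and is exactly the argument the paper intends: the corollary is stated in the paper without proof, introduced only by ``Naturally, we have the following result'' immediately after the $KG$-sequence theorem and the citation of the perfect-mapping characterisation from \cite{LS2}, so the implicit proof is precisely the combination you give --- establish point $G_{\delta}$-property as in Theorem~\ref{4dl6}, invoke the $KG$-sequence theorem, use Lemma~\ref{4yl1} and closed-hereditariness to reduce to $\overline{U}$, and apply \cite{LS2}. Your extra care in verifying that the $KG$-sequence restricts to $\overline{U}$ is a detail the paper would simply take for granted.
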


Finally, we pose the following questions.

\begin{question}
Let $\mathcal{P}$ be any calss of topological spaces which is closed hereditary and closed under locally finite unions of closed sets. Is every strongly topological gyrogroup which is locally in $\mathcal{P}$ in $\mathcal{P}$ ?
\end{question}

Clearly, if the question is affirmative, the result $G$ is a local stratifiable space (semi-stratifiable space, $\sigma$-space) in Corollary \ref{4tl1} will be strengthened directly.

\begin{question}
Let $G$ be a strongly topological gyrogroup with a symmetric neighborhood base $\mathscr U$ at $0$ and let $H$ be an admissible subgyrogroup generated from $\mathscr U$. Is the quotient space $G/H$ completely regular?
\end{question}

{\bf Acknowledgements}. The first author would like to express his congratulations to his supervisor Professor Xiaoquan Xu on the occasion of his 60th birthday. The authors are thankful to the anonymous referees for valuable remarks and corrections and all other sort of help related to the content of this article.

\end{document}